\theoremstyle{plain}
\newtheorem{thm}     {Theorem}[section]
\newtheorem{prop}    [thm]{Proposition}
\newtheorem{definition}  [thm]{Definition}
\newtheorem{cor}     [thm]{Corollary}
\newtheorem{lemma}   [thm]{Lemma}
\newtheorem{example} [thm]{Example}
\newcommand{\B}{\mathbb B}
\newcommand{\C}{\mathbb C}
\newcommand{\D}{\mathbb D}
\newcommand{\R}{\mathbb R}
\begin{document}

\title{Kobayashi hyperbolicity in Riemannian manifolds}

\author[H. Gaussier]{Herv\'e Gaussier$^1$}
\address{H. Gaussier: Univ. Grenoble Alpes, CNRS, IF, F-38000 Grenoble, France}
\email{herve.gaussier@univ-grenoble-alpes.fr}

\author[A. Sukhov]{Alexandre Sukhov$^2$}
\address{A. Sukhov: University  of Lille,   Laboratoire
Paul Painlev\'e,  Department of 
Mathematics, 59655 Villeneuve d'Ascq, Cedex, France, and
Institut of Mathematics with Computing Centre - Subdivision of the Ufa Research Centre of Russian
Academy of Sciences, 45008, Chernyshevsky Str. 112, Ufa, Russia.}
\email{sukhov@math.univ-lille1.fr}

\date{\today}
\subjclass[2020]{Primary: 32F45, 32Q45, 53A10, 53C15.}
\keywords{Kobayashi hyperbolic manifolds, Riemannian manifolds, conformal harmonic maps.}

\thanks{$^1\,$Partially supported by ERC ALKAGE}
\thanks{$^2\,$Partially suported by Labex CEMPI}


\begin{abstract}
 We study the boundary behavior of the Kobayashi-Royden metric and the Kobayashi hyperbolicity of domains in 
 Riemannian manifolds.  As an application, we prove a Fatou type theorem on the existence, almost everywhere, of non tangential limits for bounded conformal harmonic immersed discs. We also prove a Picard theorem for conformal harmonic discs and give some examples of Kobayashi hyperbolic Riemannian manifolds.
 \end{abstract}

\maketitle

\sloppy
\section{Introduction}
The Kobayashi metric is a classical object in Several Complex Variables that encodes some geometric properties of complex manifolds. It has been the subject of numerous studies since its introduction, with seminal works. Fundamental questions are still open, both in the contexts of compact and of non-compact manifolds. The idea to introduce the Kobayashi metric on Riemannian manifolds, using conformal harmonic discs, is due to M.Gromov \cite{Gr}. F. Forstneri\v c and D.Kalaj  \cite{Fo-Ka} and B. Drinovec-Drnov\v sek and F. Forstneri\v c \cite{Dr-Fo}
 obtained several results concerning the Kobayashi metric on the Euclidean space $\R^n$ equipped with the standard metric and developed the bases of the theory of the Kobayashi hyperbolicity in that frame.
The case of arbitrary Riemannian manifolds, for which the existence of the Kobayashi metric relies on the local existence of stationary discs, is considered in \cite{Ga-Su}. The present paper is the continuation of \cite{Ga-Su}. We study the asymptotic boundary behavior of the Kobayashi metric on domains in Riemannian manifolds. As an application, we prove the H\"older-$1/2$ extension of conformal harmonic discs attached to some submanifolds given as the zero set of minimal plurisubharmonic functions. We also study localization properties, as well as the complete Kobayashi hyperbolicity of some domains in Riemannian manifolds. As an application of these results we prove a Fatou type theorem. The classical Fatou theorem asserts that a bounded harmonic (or holomorphic) function on the unit disc $\D \subset \R^2$ admits non-tangential boundary limits 
almost everywhere (a.e.) on the unit circle. We extend the Fatou theorem to conformal harmonic maps from the unit disc to an arbitrary Riemannian manifold.
As another application we prove a Riemannian version of the Picard theorem, providing sufficient condition for a conformal harmonic map from the punctured unit disc into a Riemannian manifold to extend as a conformal harmonic map on the whole unit disc.

This paper is dedicated to Professor J.Globevnik.


\section{Preliminaries}

We denote by $\D$ the unit disc in $\R^2$ (we often identify $\R^2$ with the complex plane $\C$), by $ds^2$ the standard Riemannian metric on $\R^2$ and by $dm$ the standard Lebesgue measure on $\R^2$. For every $x \in \R^2$ and every $\lambda > 0$, we set $D(x,\lambda):=\{\zeta \in \C /\ |\zeta-x| < \lambda\}$. Throughout the paper, we consider a Riemannian manifold $(M,g)$, where $M$ is a smooth $C^{\infty}$ real manifold of real dimension $n \geq 2$ and $g$ is a smooth Riemannian metric of class $C^{\infty}$ on $M$. We denote by $d_g$ the distance induced by $g$, defined as the infimum of the length of $C^1$ paths joining two points. For every $p \in M$ and every $r > 0$, we denote by $B_g(p,r)$ the ball $B_g(p,r) := \{q \in M /\ d_g(p,q) < r\}$.


\subsection{Minimal surfaces, conformal and harmonic maps}

In this Subsection, we present a precise version of Lemma~2.3 Part (i) in \cite{Ga-Su} ; this is the content of Lemma~\ref{LemNW2}. For the sake of completeness we give a complete proof of Lemma~\ref{LemNW2} and we recall the  notions necessary for its undestanding, although these are essentially exposed in \cite{Ga-Su}.


Let $(M,g)$ be a Riemannian manifold.  A map $u: \D \to M$ is called harmonic if it is a critical point of the energy integral

\begin{eqnarray*}
E(u) = \int_\D \vert du \vert^2dm.
\end{eqnarray*}

A harmonic map satisfies the Euler-Lagrange  equations. 
Using the complex coordinates $\zeta=\xi + i \eta$ 
on $\D$ and local coordinates on $M$ near $u(\D)$, one can write the Euler-Lagrange equations  in the form 

\begin{eqnarray}\label{harm-eq}
\label{EL2}
\Delta u^i + (\Gamma^{i}_{jk} \circ u) \frac{\partial u^j}{\partial \zeta}\frac{\partial u^k}{\partial \overline \zeta} = 0
\end{eqnarray}
(see \cite{Jo2}). Here $\displaystyle \Delta = 4\frac{\partial^2}{\partial \zeta \partial \overline{\zeta}}$ is the standard Laplace operator, $\Gamma^{i}_{jk}\ (i, j, k = 1, \dots n)$ denote the Christoffel symbols and $u=(u^1,\dots,u^n)$. In case where the local  coordinates are normal at $p$ for the Levi-Civita connection associated to the metric $g$, we have $g_{ij}(p) = \delta_{ij}$
(the Kronecker symbols), $\partial g_{ij}/\partial x_k(p) = 0$ and $\Gamma^k_{ij}(p) = 0$ for every $i,j,k = 1,\dots,n$.



The Euler-Lagrange equations form a second order elliptic quasilinear PDE system. The initial regularity of $u$ may be prescribed in H\"older or Sobolev spaces. It follows by elliptic regularity that a solution $u$ of (\ref{EL2}) is a smooth $C^{\infty}$ map on $\D$.

A smooth map $u : \D \to M$ is called conformal if  the pull-back $u^*g$ is a metric conformal to $ds^2$ i.e.,
there exists a smooth function $\phi$ such that $u^*g = e^\phi ds^2$ on $\D$. 
A surface in $(M,g)$ is called minimal if its mean curvature induced by $g$ vanishes. A conformal  immersion (i.e., its image) is minimal if and only if 
it is harmonic (see \cite{Jo2}).

Let $u:\D \longrightarrow M$ be a smooth immersion. We denote by $g_{\mathbb D}:=u^*(g)$ the Riemannian metric on $\mathbb D$, pullback of the metric $g$ by $u$. Let $G_{\mathbb D}$ be the matrix $(G_{\mathbb D})_{i,j} = g_{\mathbb D}(\partial/\partial x_i,\partial/\partial x_j)$, for $i, j =1,2$, where for convenience we write $x_1=\xi,\ x_2=\eta$ in the matrix notation. 
The area functional $A(u)$ of an immersion $u$ is defined by

\begin{eqnarray}
A(u) = \int_\D (\det G_{\mathbb D})^{1/2}dm.
\end{eqnarray}

One may view $A$ as a real map defined on the space of smooth immersions. A smooth immersion $u$ is called  stationary if the differential $DA$ of $A$ vanishes at $u$ i.e., $DA(u) = 0$. As it is shown in \cite{Wh1}, an immersion is stationary if and only if its image is a minimal surface. Therefore, after a suitable reparametrisation of the unit disc, a stationary immersion  becomes a conformal harmonic map.

In the rest of the paper, we refer conformal harmonic immersions from $\D$ to $M$ as conformal harmonic immersed discs. Similarly, we  refer indifferently to stationary or minimal discs. Note that 
we consider stationary discs with arbitrary parametrizations, not necessarily the conformal harmonic ones. Note also that we sometimes identify an immersed disc with its image when this does not lead to any confusion.

According to \cite{Wh1}, an immersed disc is stationary if $\frac{d}{dt}_{|t=0} A(\varphi^t) = 0$ for any one-parameter family of transverse deformations of $u(\mathbb D)$ and
$$
\frac{d}{dt}_{|t=0} A(\varphi^t) = 0 \Leftrightarrow \int_{\mathbb D} \left(\frac{d}{dt}_{|t=0}\varphi^t \right) \cdot H(u) dm = 0,
$$
for some quasilinear elliptic operator $H$ such that $H(u)$ is normal to $u(\mathbb D)$. 
Then $DA(u) = 0$ if

\begin{equation}\label{stat-eq}
\pi((H(u)) = 0.
\end{equation}
where $\pi$ denotes a projection to a distribution of subspaces of $TM$ transverse to $u(\D)$. The projection $\pi$ is defined on $T\mathcal U$ where $\mathcal U$ is a sufficiently small neighborhood of $u(\D)$.

In suitable local coordinates on $M$ we consider an immersed disc $u : \mathbb D \rightarrow \mathbb R^{n}$ given as a graph
$$
u(\xi,\eta) = (\xi,\eta,u^3(\xi,\eta), \dots, u^n(\xi,\eta)),
$$
where $u^3, \dots, u^n$ are smooth $C^{\infty}$ functions. 
A one-parameter family $\varphi^t$ of transverse deformations of $u$ can be expressed in these coordinates as $\varphi^t(\xi,\eta)=(\xi,\eta,u^3(\xi,\eta)+th^3(\xi,\eta), \dots, u^n(\xi,\eta)+th^n(\xi,\eta))$ with $h^3, \dots, h^n \in C^{\infty}(\mathbb D,\mathbb R)$. 
Then, we can take for $\pi$ the orthogonal projection on $\{0\} \times \{0\} \times \mathbb R^{n-2}$ and $\pi(H(u)) = 0$ (i.e. $u$ is stationary)  if and only if
\begin{equation}\label{stat2-eq}
H^j(u) = 0, \ j = 3,...,n.
\end{equation}

Moreover, $H^j(u) = u^j_{\xi \xi}+u^j_{\eta \eta}+\psi^j(u_\xi,u_\eta,u_{\xi \xi},u_{\xi \eta},u_{\eta \eta}) \ {\rm for \ } j = 3,\dots, n$, where $\psi^j$ is a smooth $C^{\infty}$ function, without constant or linear terms with respect to $u_\xi,u_\eta,u_{\xi \xi},u_{\xi \eta},u_{\eta \eta}$. In particular, the operator $\pi(H(u))$ is a quasilinear elliptic operator whose linearization at $u^0: (\xi,\eta) \in \D \mapsto (\xi,\eta,0,\dots,0) \in \R^n$ is

\begin{eqnarray}\label{linear-eq}
\left(0, 0, \Delta u^3, \cdots, \Delta u^n\right).
\end{eqnarray}

We denote $e_1:=(1,0) \in \R^2$. The following result claims the existence of a conformal harmonic disc with prescribed $1$-jet.

\begin{lemma}
\label{LemNW2}
 Let $p \in M$ and let $E$ be a 2-dimensional subspace of  $T_pM$. Then there exists a conformal harmonic immersion 
$u: \D \to M$ with $u(0) = p$ and such that the tangent space $T_p u(\Delta)$ coincides with $E$. Furthermore, this immersion depends smoothly on  $p$, $E$ and $g$.
\end{lemma}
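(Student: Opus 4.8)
The plan is to reduce the statement to a standard implicit-function-theorem argument for the quasilinear elliptic operator $\pi_{p(\cdot)}(H(u))$ together with the conformality condition, taking advantage of the explicit form of the linearization \eqref{linear-eq}. First I would work in normal coordinates centered at $p$: choose coordinates on $M$ near $p$ so that $p$ corresponds to $0 \in \R^n$, the metric $g$ agrees with the Euclidean metric to first order at $0$ (Christoffel symbols vanish at $0$), and the prescribed plane $E$ is the $(x,y)$-coordinate plane $\R^2 \times \{0\}$. In these coordinates the flat disc $u^0\colon (x,y)\mapsto(x,y,0,\dots,0)$ is an immersion with $u^0(0)=p$ and $T_pu^0(\D)=E$, and it is an approximate solution: $H^j(u^0)$ vanishes to first order at $0$ because the Christoffel symbols do, though it need not vanish identically. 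So I would set up a Banach-space map $F(u) := (\pi_{p(\cdot)}(H(u)), \text{conformality defect})$ on a suitable space of small perturbations $u = u^0 + v$ with $v = (0,0,v^3,\dots,v^n)$ of class $C^{2,\alpha}(\overline{\D})$ (or the appropriate Hölder space), and seek a zero near $u^0$.

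The key steps, in order: (1) formulate the problem as $F(v)=0$ where $F$ maps a neighborhood of $0$ in $C^{2,\alpha}_0$ (with suitable boundary normalization, e.g. fixing $v(0)=0$ and $dv(0)=0$ to pin down the $1$-jet, or working on a slightly larger disc and restricting) into $C^{\alpha}$; (2) compute $DF(0)$: by \eqref{linear-eq} the stationarity part linearizes to $(\Delta v^3,\dots,\Delta v^n)$, an isomorphism onto its target once appropriate boundary conditions are imposed (Dirichlet, or a Riemann–Hilbert-type condition as in \cite{Ga-Su}), and the conformality part linearizes to a first-order $\overline\partial$-type operator which, combined with the standard trick of solving the harmonic equation first and then reparametrizing, can be handled; (3) invoke the implicit function theorem in Banach spaces to get, for each $(p,E)$ in a neighborhood of the given data, a solution $v=v(p,E)$ depending smoothly on the parameters — here one must check that $F$ itself depends smoothly on $(p,E)$, which follows from smooth dependence of the metric coefficients and Christoffel symbols on the coordinates and from smoothness of the chosen families of normal coordinates and of the adapted frame defining $\pi_{p(\cdot)}$; (4) observe that since the perturbation $v$ is small in $C^{2,\alpha}$ and $u^0$ is an immersion, $u=u^0+v$ remains an immersion, and that a harmonic conformal map is automatically stationary/minimal, so reparametrizing to the conformal harmonic representative (as recalled after \eqref{stat-eq}) yields the desired conformal harmonic immersed disc with $u(0)=p$ and $T_pu(\D)=E$.

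The main obstacle I expect is handling the conformality condition simultaneously with the stationarity/harmonicity equation without destroying the surjectivity of the linearization. The cleanest route, which I would pursue, is the classical one: first solve \emph{only} the minimal-surface (stationary) system \eqref{stat2-eq} by the implicit function theorem using the linearization \eqref{linear-eq}, obtaining a minimal immersed disc through $p$ tangent to $E$ depending smoothly on $(p,E)$; then appeal to the fact quoted from \cite{Wh1} that a stationary immersed disc, after a smooth reparametrization of $\D$, becomes conformal harmonic, and check that this reparametrization can be chosen to depend smoothly on $(p,E)$ and to fix the center $0$ and the tangent plane at $p$ (the reparametrization is the identity to first order at $0$ when the data is already close to conformal, so it preserves $u(0)=p$ and $T_pu(\D)=E$). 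The only subtlety is the boundary-condition bookkeeping needed to make \eqref{linear-eq} an isomorphism between the chosen Banach spaces; this is routine given the elliptic theory already used in \cite{Ga-Su}, and I would simply cite the relevant linear solvability result there rather than reprove it.
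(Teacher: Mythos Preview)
Your proposal has a genuine gap at the point where you invoke the implicit function theorem. You correctly note that the flat disc $u^0:(x,y)\mapsto(x,y,0,\dots,0)$ is only an \emph{approximate} solution of the stationary system \eqref{stat2-eq} for the metric $g$: in normal coordinates the Christoffel symbols vanish at $0$ but are of size $O(|x|)$ on $\D$, so $H^j(u^0)$ is nonzero and its $C^{0,\alpha}(\D)$ norm is of order one (it is governed by the curvature of $g$, not by anything small). The implicit function theorem describes the zero set of $F$ \emph{near a point where $F$ already vanishes}; it does not, by itself, produce a zero near a point where $F$ is merely ``small to first order at the origin''. Without an additional smallness or deformation parameter, the step ``seek a zero near $u^0$'' is unjustified.

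The paper supplies exactly the missing idea: introduce the one-parameter family of metrics $h_t(x)=t^{-2}g(tx)$. Since $h_t$ is a constant multiple of the pullback $g_t(x)=g(tx)$, it has the same stationary surfaces as $g_t$ and hence (after dilation) as $g$; moreover $h_t\to g_{st}$ in every $C^k$ norm on compacta as $t\to 0$. At $t=0$ the flat disc $u^0$ \emph{is} an exact solution, so the implicit function theorem with $t$ as an extra parameter applies at $(t,f)=(0,0)$ and yields stationary graphs for all small $t>0$. A second point to be aware of: the solution so produced need not satisfy $u(0)=p$ and $T_pu(\D)=E$ exactly, and your suggestion of imposing $v(0)=0$, $dv(0)=0$ on top of a Dirichlet-type boundary condition overdetermines the linearized problem (the Laplacian is then no longer surjective onto $C^{0,\alpha}$). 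The paper handles this instead by letting the initial data $(p',E')$ vary in a neighborhood and using an open-mapping argument on the $1$-jet to hit $(p,E)$ on the nose; this is cleaner than trying to build the pointwise constraints into the function space.
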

\begin{proof} We consider normal coordinates $(x_1,\dots,x_n)$ on $M$ in which $p = 0$. 
Consider the metric $h_t(x_1,\dots,x_n) = t^{-2}g_t(x_1,\dots,x_n) := t^{-2}g(tx_1, \dots, tx_n)$ for $t > 0$. 
The metric $h_t$ has the same set of stationary surfaces as $g_t$ and $g$. Moreover, the metric $h_t$ converges to the standard metric $g_{st}$ of $\R^n$ in any $C^k$-norm on any compact subset of  $\R^n$, as $t \rightarrow 0$.

We may assume that $E$ is generated by the vectors $(1,0,...,0)$ and $(0,1,...,0)$. We search for a suitable stationary immersion of the unit disc of the form $(\xi,\eta) \mapsto (\xi,\eta,f(\xi,\eta))$ i.e., as the graph of a vector function $f: \D \to \R^{n-2}$. Consider the system (\ref{stat2-eq}) for 
the metric $h_t$.  Its linearization at $f=0$ is given by (\ref{linear-eq}) which is a surjective operator  from $C^{2,\alpha}(\D, \R^{n-2})$ to $C^{0,\alpha}(\D,\mathbb R^{n-2})$, with any fixed $\alpha \in (0,1)$ (this follows from the regularity of the Laplace potential, see more details in \cite{Ga-Su}). 
By the implicit function theorem, the system~(\ref{stat2-eq}) admits solutions for $t > 0$ small enough. Namely, for every sufficiently small $t$, there exists a minimal surface, given by a smooth stationary immersion $u(t,p,E) : \D \rightarrow \mathbb R^n$, for the Riemannian metric $h_t$, such that  $u(t,p,E) : (\xi,\eta) \in \D \mapsto  (\xi,\eta,f_{t,p,E}(\xi,\eta))$ depends smoothly on $(t,p,E)$ and is a small deformation of the map $(\xi,\eta) \mapsto (\xi,\eta,0)$. In particular, $u(t,p,E)(0)$ is close to $p$ and $du(t,p,E)(0) (T_0 \D)$ is close to $E$. Now, if $\mathcal U_{p}$ is a small neighborhood of $p$ in $\R^n$ and $\mathcal V$ is a small neighborhood of $E$ in the  Grassmanian $Gr(2,n)$, the same reasoning implies that for every sufficiently small $t$, the set $(u(t,\tilde p,\tilde E)(0), du(t,\tilde p,\tilde E)(0) (T_0\D))$ where $\tilde p \in \mathcal U_{p}$ and $\tilde E \in \mathcal V$, fills an open neighborhood of $(p,E)$ in $\R^n \times  Gr(2,n)$. Hence, for sufficiently small $t$, there exists $(\tilde p,\tilde E) \in \mathcal U_{p} \times \mathcal V$ such that $u(t,\tilde p,\tilde E) (0) = p$ and $du(t,\tilde p,\tilde E) (0) (T_0 \D)= E$.  Note that all solutions are $C^\infty$ smooth by the elliptic regularity. Finally, being a solution of the system~(\ref{stat2-eq}), $u(t,\tilde p,\tilde E) : \D \rightarrow \R^n$ is a stationary disc for $g_t$ and therefore for $g$. Hence, this disc  becomes a  conformal harmonic immersion  for $g$ after a suitable reparametrization.  The smooth dependence on $p$, $E$ and $g$ follows from the implicit function theorem. This proves Lemma~\ref{LemNW2} . 
\end{proof}

\subsection{MPSH functions}






 

Similarly to  \cite{Dr-Fo}, we introduce the following

\begin{definition}\label{MPSH-def}
Let $\rho : M \rightarrow [-\infty,+\infty)$ be an upper semi-continuous function. We say that $\rho$ is a minimal plurisubharmonic (MPSH) function if for every conformal harmonic immersed disc $u : \D \rightarrow M$, the composition $\rho \circ u$ is a subharmonic function on $\D$.
\end{definition}

In the case of  $C^2$ functions, we have the following characterization of minimal plurisubharmonicity

\begin{lemma}\label{MPSH-lem}
Let $\rho : M \rightarrow \mathbb R$ be a function of class $C^2$. Then $\rho$ is MPSH if and only if $\Delta (\rho \circ u) \geq 0$ for every conformal harmonic immersed disc $u : \D \rightarrow M$.
\end{lemma}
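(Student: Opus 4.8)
The plan is to prove the equivalence by reducing minimal plurisubharmonicity of a $C^2$ function to a pointwise condition on the Laplacian of $\rho \circ u$. Since subharmonicity of a $C^2$ function on $\D$ is equivalent to the nonnegativity of its Laplacian everywhere on $\D$, one direction is immediate from Definition~\ref{MPSH-def}: if $\rho$ is MPSH, then $\rho \circ u$ is subharmonic for every conformal harmonic immersed disc $u$, hence $\Delta(\rho\circ u)\geq 0$ on $\D$. The substance is the converse.

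For the converse, suppose $\Delta(\rho\circ u)\geq 0$ for every conformal harmonic immersed disc $u$, and let $v:\D \to M$ be an arbitrary conformal harmonic immersed disc; we must show $\rho\circ v$ is subharmonic on $\D$. Since $\rho \circ v$ is $C^2$, it suffices to check $\Delta(\rho\circ v)(\zeta_0)\geq 0$ at every point $\zeta_0\in\D$. The idea is that the condition $\Delta(\rho\circ u)\geq 0$ is a local, pointwise condition on the $2$-jet of $u$ at points of $\D$: writing out $\Delta(\rho\circ u)(\zeta_0)$ in local coordinates using the chain rule, it is a quadratic expression in $du(\zeta_0)$ plus a term linear in the second derivatives $u_{xx}, u_{yy}$ — but because $u$ is harmonic, those second derivatives are expressed via the equation~(\ref{harm-eq}) in terms of the Christoffel symbols and the first derivatives of $u$, and because $u$ is conformal, $du(\zeta_0)$ maps the standard frame to an orthogonal pair of equal-length tangent vectors spanning a $2$-plane $E\subset T_{v(\zeta_0)}M$. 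Thus $\Delta(\rho\circ u)(\zeta_0)$ depends only on the point $p := v(\zeta_0)$, the $2$-plane $E := dv(\zeta_0)(T_{\zeta_0}\D)$, and the conformal factor — and in fact a short computation shows the conformal factor scales the whole expression by a positive constant, so the \emph{sign} of $\Delta(\rho\circ u)(\zeta_0)$ depends only on $(p,E)$.

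With this observation in hand, the proof is completed by invoking Lemma~\ref{LemNW2}: given $p = v(\zeta_0)$ and $E = dv(\zeta_0)(T_{\zeta_0}\D)$, there exists a conformal harmonic immersion $u:\D\to M$ with $u(0)=p$ and $T_pu(\D) = E$. By hypothesis $\Delta(\rho\circ u)(0)\geq 0$, and by the jet computation above this quantity has the same sign as $\Delta(\rho\circ v)(\zeta_0)$ (the positive conformal factors only rescale), so $\Delta(\rho\circ v)(\zeta_0)\geq 0$. Since $\zeta_0$ was arbitrary, $\rho\circ v$ is subharmonic, and since $v$ was an arbitrary conformal harmonic immersed disc, $\rho$ is MPSH.

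The main obstacle I expect is the bookkeeping in the jet computation: one must verify carefully that, after using the harmonic equation to eliminate the Laplacian terms $\Delta u^i$ and the conformality relations $g_{\mathbb D}(\partial_x,\partial_x) = g_{\mathbb D}(\partial_y,\partial_y)$, $g_{\mathbb D}(\partial_x,\partial_y)=0$, the expression $\Delta(\rho\circ u)(\zeta_0)$ indeed becomes $e^{\phi(\zeta_0)}$ times a quantity built solely from $\nabla^2\rho$ (the Hessian with respect to $g$) evaluated on an orthonormal basis of $E$ — concretely, that $\Delta(\rho\circ u)(\zeta_0)$ is a positive multiple of the trace of the restriction of the Riemannian Hessian of $\rho$ to $E$. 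This is the natural ``MPSH $=$ positivity of the Hessian trace on $2$-planes'' statement, and once it is established the reduction to Lemma~\ref{LemNW2} is routine; I would present the computation in normal coordinates at $p$ to minimize the Christoffel-symbol terms, noting they contribute nothing to the leading Hessian expression since $\Gamma^i_{jk}(p)=0$ there.
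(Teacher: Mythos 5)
Your proof is correct, but the converse direction is far longer than it needs to be, and the machinery you invoke is not required for this particular lemma. The hypothesis of the converse is that $\Delta(\rho\circ u)\geq 0$ for \emph{every} conformal harmonic immersed disc $u$; in particular it applies directly to the arbitrary disc $v$ you fixed. Since $\rho$ is $C^2$ and $v$ is $C^\infty$ by elliptic regularity, $\rho\circ v$ is $C^2$, and a $C^2$ function on a planar domain with nonnegative Laplacian is subharmonic; hence $\rho\circ v$ is subharmonic and $\rho$ is MPSH, with no need for Lemma~\ref{LemNW2} or any jet computation. This is why the paper states the lemma without proof: both directions reduce to the classical equivalence between subharmonicity and $\Delta\geq 0$ for $C^2$ functions, the quantifier over $u$ being the same on both sides. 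What your argument actually establishes is the strictly stronger statement that appears later as Lemma~\ref{LemPSH2}: if for each point $p$ and each $2$-plane $E\subset T_pM$ there is \emph{one} conformal harmonic disc through $p$ tangent to $E$ with $\Delta(\rho\circ u)(0)\geq 0$, then $\rho$ is MPSH. There the detour through Lemma~\ref{LemNW2}, together with the observation (Lemma~\ref{LemPSH1} combined with conformality) that the sign of $\Delta(\rho\circ u)(0)$ depends only on $(p,E)$ because the conformal factor rescales the trace of the Hessian of $\rho$ restricted to $E$ by a positive constant, is genuinely needed, and your sketch of that reduction matches the paper's own proof of that later lemma. For the lemma as stated, the one-line proof is the intended one; presenting the long version here would misleadingly suggest the statement depends on the existence theorem for minimal discs, which it does not.
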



Since a conformal harmonic disc has only a finite number of singular points on compact subsets of $\D$,  a function $\rho : M \rightarrow \mathbb R$ of class $C^2$ is MPSH if and only if $\Delta (\rho \circ u) \geq 0$ for every conformal harmonic disc $u : \D \rightarrow M$. Moreover, there are sufficiently many conformal harmonic immersed discs for the MPSH notion to be consistent, as showed by Lemma~\ref{LemNW2}.



\begin{lemma}
\label{LemPSH1}
Let $\rho$ be a $C^2$ function on $M$ and $u: \D \to M$ be a conformal harmonic disc. Then $\Delta (\rho \circ u)(0)$ depends only on the $1$-jet of $u$ at the origin.
\end{lemma}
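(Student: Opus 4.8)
The plan is to write $\Delta(\rho\circ u)$ in a local chart on $M$ and to use harmonicity of $u$ to eliminate the only term that a priori involves second derivatives of $u$; after this elimination, the value at the origin sees nothing beyond $u(0)$ and the first derivatives of $u$ at $0$.

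First I would fix local coordinates $(x^1,\dots,x^n)$ on $M$ near $q:=u(0)$ and set $v:=\rho\circ u$, which is of class $C^2$ on $\D$ since $\rho\in C^2(M)$ and $u\in C^\infty(\D,M)$; in particular $\Delta v(0)$ is a well-defined real number. The chain rule gives, near $0$,
\begin{equation*}
\Delta v \;=\; \sum_{i,j}\frac{\partial^2\rho}{\partial x^i\partial x^j}(u)\left(\frac{\partial u^i}{\partial x}\frac{\partial u^j}{\partial x}+\frac{\partial u^i}{\partial y}\frac{\partial u^j}{\partial y}\right)\;+\;\sum_i\frac{\partial\rho}{\partial x^i}(u)\,\Delta u^i .
\end{equation*}
Then I would invoke the Euler--Lagrange equation~(\ref{EL2}): harmonicity of $u$ gives $\Delta u^i=-\Gamma^i_{jk}(u)\,\partial_z u^j\,\partial_{\bar z}u^k$, and since $\partial_z u^j=\tfrac12(\partial_x u^j-i\,\partial_y u^j)$ and $\partial_{\bar z}u^j=\tfrac12(\partial_x u^j+i\,\partial_y u^j)$, each $\Delta u^i$ is a fixed bilinear expression in $(\partial_x u,\partial_y u)$ with coefficients $\Gamma^i_{jk}(u)$. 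Substituting this into the displayed identity and evaluating at $0$, one sees that $\Delta v(0)$ is a universal polynomial expression in the quantities $\partial\rho/\partial x^i(q)$, $\partial^2\rho/\partial x^i\partial x^j(q)$, $\Gamma^i_{jk}(q)$, $\partial_x u^i(0)$ and $\partial_y u^i(0)$. The first three families are completely determined by $q=u(0)$ ($\rho$ and $g$ being fixed), and the last two are precisely the components of $du(0)$. Hence $\Delta v(0)$ depends only on the pair $\big(u(0),du(0)\big)$, i.e. on the $1$-jet of $u$ at the origin. If one prefers, taking normal coordinates centred at $q$ makes $\Gamma^i_{jk}(q)=0$, so that $\Delta v(0)=\sum_{i,j}\tfrac{\partial^2\rho}{\partial x^i\partial x^j}(q)\big(\partial_x u^i(0)\partial_x u^j(0)+\partial_y u^i(0)\partial_y u^j(0)\big)$, which displays the dependence even more transparently.

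I do not expect a real obstacle here; the only substantive point is to notice that it is harmonicity — not conformality — that allows one to trade the second-order term $\Delta u^i$ for first-order data, so that the hypothesis is used in the right place (and that $\Delta v(0)$ makes classical sense although $\rho$ is merely $C^2$). It is worth recording that the displayed formula, though derived in a chart, is intrinsic: after the substitution it equals, up to the normalisation of $\Delta$, the trace over $T_0\D$ of the pull-back by $du(0)$ of the covariant Hessian $\nabla d\rho$ at $u(0)$, the contribution of the tension field in the composition formula $\Delta(\rho\circ u)=\tr\big(u^*\nabla d\rho\big)+d\rho(\tau(u))$ dropping out since $\tau(u)=0$ for $u$ harmonic. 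This makes the coordinate-independence manifest and will be convenient when this $1$-jet invariant is used later.
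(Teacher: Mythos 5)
Your argument is correct and is essentially the proof in the paper: both compute $\Delta(\rho\circ u)$ by the chain rule and use the Euler--Lagrange equation (\ref{EL2}) to replace $\Delta u^i$ by first-order data, the paper simply working from the start in normal coordinates at $u(0)$ (where the Christoffel symbols vanish) to arrive at the same formula you record at the end. Your closing remark that only harmonicity, not conformality, is used, and the intrinsic reading via $\Delta(\rho\circ u)=\tr(u^*\nabla d\rho)+d\rho(\tau(u))$, are accurate but not needed.
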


\begin{proof}


Let $(x_1, \dots, x_n)$ be local coordinates  a neighborhood $U$ of $p$,  and let $u=(u^1,\dots,u^n): \D \longrightarrow U$ be a conformal harmonic disc. We still use the notation $\zeta = \xi + i \eta$ for coordinates in $\D$.
Since $u$ is harmonic, it follows from (\ref{harm-eq}) that 
\begin{eqnarray}
\label{Hess1}
\sum_{i=1}^n \Delta u^i = - \sum_{i,j,k=1}^n \left(\Gamma^i_{j,k} \circ u\right) \frac{\partial u^j}{\partial \zeta} \frac{\partial u^k}{\partial \overline \zeta}.
\end{eqnarray}
Hence, we obtain after a direct computation:

\begin{eqnarray}\label{eq2}
\ \ \Delta (\rho \circ u) =  \sum_{ij}\left(\frac{\partial^2 \rho}{\partial x_i \partial x_j} \circ u\right)
  \left ( \frac{\partial u^i}{\partial \xi} \frac{\partial u^j}{\partial \xi} +\frac{\partial u^i}{\partial \eta} \frac{\partial u^j}{\partial \eta} \right )
    - \sum_{i,j,k}  \left(\frac{\partial \rho}{\partial x_i} \circ u\right)\left(\Gamma^i_{j,k} \circ u\right) \frac{\partial u^j}{\partial \zeta} \frac{\partial u^k}{\partial \overline \zeta}.
\end{eqnarray}
\end{proof}

Note  that if $(x_1,...,x_n)$ are normal coordinates at $p = 0$, then $\Gamma^j_{k,l}(p) = 0$. If $u(0) = p$,  then (\ref{eq2}) becomes at $p$:
\begin{eqnarray}
\label{eq3}
 \Delta (\rho \circ u)(0)  =  \sum_{ij}\frac{\partial^2 \rho}{\partial x_i \partial x_j}(p)
  \left ( \frac{\partial u^i}{\partial \xi} \frac{\partial u^j}{\partial \xi} +\frac{\partial u^i}{\partial \eta} \frac{\partial u^j}{\partial \eta} \right )(0).
    \end{eqnarray}

Finally we have the following
\begin{lemma}
\label{LemPSH2}
Let $\rho$ be a $C^2$ function on $M$. Assume that for every $p$ in $M$ and every 2-dimensional subspace $E$ of $T_pM$, there exists a conformal harmonic disc $u$ such that $u(0) = p$ and $T_p u(\D) = E$, and such that $\Delta (\rho \circ u)(0) \ge 0$. Then $\rho$ is an MPSH function.
\end{lemma}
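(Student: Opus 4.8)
The plan is to verify the $C^2$ criterion of Lemma~\ref{MPSH-lem}: it suffices to prove that $\Delta(\rho\circ v)(z_0)\geq 0$ for every conformal harmonic immersed disc $v:\D\to M$ and every $z_0\in\D$. I would first localize at $z_0$. Choosing $r>0$ with $\overline{D(z_0,r)}\subset\D$, the reparametrized map $w(\zeta):=v(z_0+r\zeta)$ is again a conformal harmonic immersed disc, since precomposition with the holomorphic map $\zeta\mapsto z_0+r\zeta$ preserves conformality, harmonicity and the immersion property; moreover $w(0)=p:=v(z_0)$, and the chain rule gives $\Delta(\rho\circ w)(0)=r^2\,\Delta(\rho\circ v)(z_0)$. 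Hence it is enough to show $\Delta(\rho\circ w)(0)\geq 0$, i.e.\ we may assume $z_0=0$.

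The main computation is already contained in the proof of Lemma~\ref{LemPSH1}. By formula~\eqref{eq2},
$$
\Delta(\rho\circ w)(0)=\nabla^2\rho(p)(X,X)+\nabla^2\rho(p)(Y,Y),
$$
where $X:=dw(0)(\partial/\partial x)$ and $Y:=dw(0)(\partial/\partial y)$ are the images in $T_pM$ of the coordinate vectors. Now I would feed in the conformality of $w$ at $0$: it says precisely that $g_p(X,X)=g_p(Y,Y)=:c^2$ and $g_p(X,Y)=0$, with $c>0$ because $w$ is an immersion. Thus $\{X/c,Y/c\}$ is a $g_p$-orthonormal basis of $E:=\Span(X,Y)=T_pw(\D)$, and the identity above becomes
$$
\Delta(\rho\circ w)(0)=c^2\bigl(\nabla^2\rho(p)(X/c,X/c)+\nabla^2\rho(p)(Y/c,Y/c)\bigr)=c^2\,\tr_{g_p|_E}\bigl(\nabla^2\rho(p)|_{E\times E}\bigr),
$$
the trace of the restricted Hessian with respect to $g_p|_E$. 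The crucial point is that this trace is independent of the orthonormal basis used to compute it, so it depends only on the pair $(p,E)$, not on the parametrization of $w$.

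It remains to bring in the hypothesis. Applied to the pair $(p,E)$, it yields a conformal harmonic disc $u$ with $u(0)=p$, $T_pu(\D)=E$ (hence $u$ is an immersion at $0$) and $\Delta(\rho\circ u)(0)\geq 0$. Carrying out the same computation for $u$ in place of $w$ gives $\Delta(\rho\circ u)(0)=c_u^2\,\tr_{g_p|_E}\bigl(\nabla^2\rho(p)|_{E\times E}\bigr)$ with $c_u>0$, whence $\tr_{g_p|_E}\bigl(\nabla^2\rho(p)|_{E\times E}\bigr)=\Delta(\rho\circ u)(0)/c_u^2\geq 0$. Consequently $\Delta(\rho\circ w)(0)=c^2\,\tr_{g_p|_E}\bigl(\nabla^2\rho(p)|_{E\times E}\bigr)\geq 0$, and undoing the localization, $\Delta(\rho\circ v)(z_0)\geq 0$ for every $z_0$; by Lemma~\ref{MPSH-lem}, $\rho$ is MPSH. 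I expect the one genuinely non-mechanical step to be the middle one — realizing that conformality together with harmonicity (the vanishing of the second-order term in \eqref{eq2}) collapses $\Delta(\rho\circ w)(0)$ into a \emph{positive} multiple of the reparametrization-invariant quantity $\tr_{g_p|_E}(\nabla^2\rho(p)|_{E})$ — since it is exactly this that lets a hypothesis about one (arbitrarily parametrized) disc through each $(p,E)$ govern $\Delta(\rho\circ v)(z_0)$ for all conformal harmonic immersed discs $v$.
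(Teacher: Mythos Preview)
Your proof is correct and rests on the same underlying fact as the paper's: via formula~\eqref{eq2}, for any conformal harmonic disc $u$ with $u(0)=p$ immersed at $0$, the quantity $\Delta(\rho\circ u)(0)$ is a positive multiple of something depending only on the pair $(p,E)$ with $E=T_pu(\D)$.

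The packaging differs slightly. The paper localizes via a M\"obius automorphism of $\D$ and then explicitly constructs a rotation $L$ of $\R^2$ so that the reparametrized disc $\tilde g=f\circ h\circ L$ has the same $1$-jet at $0$ as the hypothesis disc $u$ up to a scalar $\lambda$, invoking Lemma~\ref{LemPSH1} (together with the quadratic scaling visible in~\eqref{eq2}) to conclude $\Delta(\rho\circ u)(0)=\lambda^2\Delta(\rho\circ g)(0)$. You instead name the invariant directly: conformality at $0$ turns~\eqref{eq2} into $c^2\,\tr_{g_p|_E}(\nabla^2\rho(p)|_E)$, and basis-independence of the trace replaces the explicit matching of $1$-jets. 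Your route is a bit more streamlined and makes the geometric content (the restricted-Hessian trace) visible, while the paper's version stays closer to disc reparametrizations; both are short and neither needs more than~\eqref{eq2}.
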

\begin{proof} Let $f : \D \to  M$ be a conformal harmonic immersed disc. Let $p = f(\zeta_0)$ for some $\zeta_0 \in \D$. By assumption, there exists a conformal  harmonic  disc $u:\D \to M$ satisfying $u(0) = p$, $T_pf(\D) = T_pu(\D)$, and $\Delta (\rho \circ u)(0) \ge 0$. Let $h=(h_1,h_2)$ be a holomorphic automorphism of $\D$ such that $h(0)=\zeta_0$. Then $g := f \circ h$ is a conformal harmonic immersed disc, with $g(0)=p$ and $T_pg(\D) = T_pf(\D)$. Moreover

$$
\displaystyle \Delta(\rho \circ g)(0) = \Delta(\rho \circ f)(\zeta_0) \left(\left| \frac{\partial h_1}{\partial \xi}(0)\right|^2 + \left| \frac{\partial h_2}{\partial \xi}(0)\right|^2\right).
$$

Denote by $(e_1,e_2)$ the canonical basis of $\R^2$ and consider the vectors $v_j:=du(0)(e_j)$, $j=1,2$. Since the map $g$ is conformal, there exists an orthonormal basis 
$a_1,a_2$ in  $\R^2$  and $\lambda \in \R$  such that $dg(0)(a_1) = \lambda v_1$, $dg(0)(a_2) = \lambda v_2$. Let $L$ be an orthogonal tranformation  of $\R^2$ such that $L( e_j) = a_j$, $j=1,2$. Then $\tilde g:=g \circ L$ is a conformal harmonic immersed disc satisfying $\tilde g(0) = p$, $d \tilde g(0)(e_j) = \lambda v_j$, $j= 1,2$ and by Lemma~\ref{LemPSH1} (see, for example (\ref{eq3}):

$$
\lambda^2 \Delta (\rho \circ u)(0) = \Delta (\rho \circ \tilde g)(0) = \Delta (\rho \circ g)(0).
$$

Hence $\Delta (\rho \circ g)(0) \geq 0$ and so $\Delta (\rho \circ f)(\zeta_0) \geq 0$.
\end{proof}


Consider the following basic example. Let $x=(x_1,\dots,x_n)$ be normal coordinates at $p \in M$, in particular  $x(p) = 0$. Then the function $|x|^2 : q \mapsto |x(q)|^2=\sum_{j=1}^nx_j^2(q)$  is defined and MPSH in a neighborhood $U$ of $p$. Indeed, as we saw in the proof of Lemma \ref{LemNW2}, locally there exists a smooth 1-parameter family of Riemannian metrics (denoted by $h_t$ in the proof of Lemma \ref{LemNW2}) such that that for every $t> 0$ the set of conformal harmonic maps with respect to $h_t$ is the same as the one for the initial metric $g$ and, furthermore, $h_0 = g_{st}$. The function is clearly MPSH for $g_{st}$ and there exists a constant $C > 0$ such that for every linear conformal disc $u$ through the origin one has $(\Delta \rho \circ u)(0) \ge C$. Then by continuity the same holds for conformal harmonic discs with respect to the metric $h_t$ for $t$ small enough. 
Then we can apply Lemma \ref{LemPSH2}.


\vspace{2mm}
A function $\rho: M \to \R$ is called strictly MPSH if for every point $p \in M$ there exist local normal coordinates $x$ centered at $p$ and $\varepsilon > 0$ such that $\rho - \varepsilon \vert x \vert^2$ is an MPSH function in a neighborhood of $p = 0$. Clearly, such a positive constant $\varepsilon$ can be choosen independently of $p \in K$, for every compact subset $K \in \Omega$. Notice that a strictly convex funtion is strictly MPSH. We have the following

\begin{lemma}
\label{LemLog}
Suppose that the normal coordinates $x$ are chosen at the point $p = 0$.
Then the function $\phi(x) = \log \vert x \vert + A \vert x \vert$ is strictly MPSH in a neighborhood of the origin for a sufficiently large $A > 0$. In particular, for every $q$ sufficiently close to $p$ the function $ \phi_q(x) = \log \vert x - x(q) \vert + A \vert x - x(q) \vert$ is MPSH in the same neighborhood of the origin.
\end{lemma}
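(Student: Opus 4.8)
The plan is to reduce strict minimal plurisubharmonicity of $\phi$ to a pointwise positivity statement about its Riemannian Hessian, using Lemma~\ref{LemPSH2} together with formula~(\ref{eq2}). Recall that (\ref{eq2}) identifies, for any conformal harmonic immersed disc $u$ with $u(0)=p$, the Laplacian $\Delta(\phi\circ u)(0)$ with $\nabla^2\phi(p)(V_x,V_x)+\nabla^2\phi(p)(V_y,V_y)$, where $V_x=du(0)(\partial/\partial x)$ and $V_y=du(0)(\partial/\partial y)$ form a $g$-orthogonal pair of equal $g$-length spanning $E:=T_pu(\D)$; precomposing $u$ with a conformal linear automorphism of $\D$ (which multiplies this quantity by a positive constant, as in the proof of Lemma~\ref{LemPSH2}) we may take $(V_x,V_y)$ to be a $g$-orthonormal basis of $E$, so that $\Delta(\phi\circ u)(0)$ has the sign of $\tr_E\nabla^2\phi(p)$, the trace of the Hessian over $E$ taken in a $g$-orthonormal frame. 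Since Lemma~\ref{LemNW2} provides, for every $p$ and every $2$-plane $E$, such a disc, it suffices to exhibit a punctured neighbourhood $0<|x|<r_0$ of the origin on which $\tr_E\nabla^2\phi(p)>0$ for all $p$ and all $E\subset T_pM$. The singularity at $x=0$ is then removed automatically: $\log|x|$ is bounded above near the origin and $u^{-1}(0)$ is discrete (hence polar) because the immersion $u$ is locally injective, so strict subharmonicity of $\phi\circ u$ off $u^{-1}(0)$ forces subharmonicity of $\phi\circ u$ across it.

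To estimate $\tr_E\nabla^2\phi(p)$ I would work in the normal coordinates at $p=0$, using (\ref{hessian-eq}) and writing $\phi=\log|x|+A|x|$. The flat Hessians are elementary: the Euclidean Hessian of $\log|x|$ has trace over $E$ equal to $\tfrac{2}{|x|^2}-\tfrac{2|P_Ex|^2}{|x|^4}=\tfrac{2|P_{E^\perp}x|^2}{|x|^4}\ge 0$, and the Euclidean Hessian of $|x|$ has trace over $E$ equal to $\tfrac{2}{|x|}-\tfrac{|P_Ex|^2}{|x|^3}\ge\tfrac{1}{|x|}$, where $P_E,P_{E^\perp}$ are Euclidean orthogonal projections; so the ``flat part'' of $\tr_E\nabla^2\phi$ is at least $A/|x|$. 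Two curvature corrections remain. First, the Christoffel term $-\sum_k\Gamma^k_{ij}\partial_k\phi$ in (\ref{hessian-eq}): in normal coordinates $\Gamma^k_{ij}(x)=O(|x|)$ and $\partial_k\phi(x)=O(1/|x|+A)$, so this contributes $O(1+A|x|)$. Second, the trace in (\ref{eq2}) is taken in a $g$-orthonormal frame of $E$ while the identities above used the Euclidean frame; since $g_{ij}(x)=\delta_{ij}+O(|x|^2)$ in normal coordinates and the flat Hessian of $\phi$ has operator norm $O(1/|x|^2+A/|x|)$, changing the frame costs at most $O(|x|^2)\cdot O(1/|x|^2+A/|x|)=O(1+A|x|)$. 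Altogether, on $0<|x|<r_0$,
$$\tr_E\nabla^2\phi(p)\ \ge\ \frac{A}{|x|}-C\bigl(1+A|x|\bigr)$$
for a constant $C$ depending only on $g$, valid once $r_0$ is small enough for these Taylor estimates. The function $r\mapsto A/r-C-CAr$ is decreasing, so its infimum over $(0,r_0)$ is $A/r_0-C-CAr_0=A(1/r_0-Cr_0)-C$; fixing $r_0<1/\sqrt{C}$ and then taking $A$ large makes this positive, which proves the first assertion.

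For $\phi_q$ the same computation applies verbatim with $|x|$ replaced by $|x-x(q)|$: the two Euclidean trace identities are translation-invariant, so the flat part of $\tr_E\nabla^2\phi_q$ is again $\ge A/|x-x(q)|$; and for $q$ sufficiently close to $p$ the point $x(q)$ lies in $\{|x|<r_0\}$, where the metric is still $\delta_{ij}+O(|x|^2)$-close to Euclidean, so the Christoffel and frame corrections are again of type $O(1+A|x-x(q)|)$ and, after a possible further enlargement of $A$, are dominated by the $A/|x-x(q)|$ gain. Combined with the removable-singularity argument at $x=x(q)$, this yields that $\phi_q$ is MPSH on the same neighbourhood.

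The step I expect to require the most care — and the main obstacle — is precisely this uniform domination of all error terms by the $A/|x|$ (resp. $A/|x-x(q)|$) gain right up to the singularity: one must verify that every curvature contribution to $\tr_E\nabla^2\phi$ is only $O\bigl(1+A\cdot(\text{distance to the singularity})\bigr)$, so that it stays below $A/(\text{distance})$ as that distance tends to $0$. For the translated functions $\phi_q$ this is exactly why $q$ must be taken close to $p$, so that the constants entering the near-singularity estimates around $x(q)$ remain under control.
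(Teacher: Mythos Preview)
Your argument for the first assertion is correct and takes a genuinely different route from the paper. The paper parametrizes conformal harmonic discs through $0$ by their tangent planes, estimates $\Delta(\phi\circ u)(\zeta)$ along each such disc near $\zeta=0$, and then invokes smooth dependence together with compactness of $\Gr(2,n)$ to make $A$ and the lower bound uniform. You instead work intrinsically with the Riemannian Hessian, using (\ref{eq2}) and Lemma~\ref{LemPSH2} to reduce to the pointwise inequality $\tr_E\nabla^2\phi>0$; your explicit bound $\tr_E\nabla^2\phi\ge A/|x|-C(1+A|x|)$ replaces the compactness argument by a direct estimate, which is cleaner and makes the uniformity manifest.

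For $\phi_q$, however, there is a genuine gap. Your claim that the Christoffel and frame corrections are ``again of type $O(1+A|x-x(q)|)$'' is not correct: the normal coordinates are centred at $0$, not at $q$, so near the singularity $x=x(q)$ you only have $g_{ij}(x)-\delta_{ij}=O(|x|^2)=O(|x(q)|^2)$ and $\Gamma^k_{ij}(x)=O(|x(q)|)$, not $O(|x-x(q)|^2)$ and $O(|x-x(q)|)$. Since the Euclidean Hessian of $\log|x-x(q)|$ has operator norm $\sim 1/|x-x(q)|^2$, the frame correction carries a term of order $|x(q)|^2/|x-x(q)|^2$, which is \emph{not} dominated by your main gain $A/|x-x(q)|$ on the annulus $0<|x-x(q)|\lesssim |x(q)|^2/A$. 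No enlargement of $A$ removes this, and taking $q$ closer to $p$ only shrinks the bad annulus without eliminating it.

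The paper avoids this by changing viewpoint: it passes to normal coordinates $x^q$ centred at $q$, where your first-part computation applies verbatim (with constants uniform in $q$ near $p$) and yields that $\log|x^q|+A|x^q|$ is \emph{strictly} MPSH, i.e.\ $\log|x^q|+A|x^q|-\varepsilon|x^q|^2$ is MPSH for some $\varepsilon>0$ independent of $q$. It then observes that $x-x(q)$ is a small smooth deformation of $x^q$ on the fixed neighbourhood, and uses the $\varepsilon$-margin to absorb this perturbation. If you want to keep your Hessian approach, the natural fix is the same: run your estimate in normal coordinates at $q$ rather than at $0$, and treat the passage from $|x^q|$ to $|x-x(q)|$ as a separate perturbation step.
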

\proof  By Lemma \ref{LemNW2}, there exists a family of immersed minimal discs centered at the origin 
such that their tangent spaces at the origin fill the Grassmannian $Gr(2,n)$.  Furthermore, these discs depend smoothly on the spaces that run over $Gr(2,n)$.

We denote by $\zeta=\xi + i \eta$ the coordinates in $\D$. Let $u: \D \longrightarrow (\R^n,g)$ be a conformal harmonic immersion with $u(\zeta) = (\xi,\eta,0,...,0) + O(\vert \zeta \vert^2)$. Then 
$$
\Delta \log |u|(\zeta) = O\left(1/|\zeta| \right)
$$
and
$$
\Delta \vert u\vert(\zeta) = \left(1/|\zeta|\right)\left(1 + O(|\zeta|)\right).
$$

This implies that $\log |u| + A \vert u \vert$ is subharmonic near the origin for a sufficiently large constant $A>0$. Notice that up to now the constant $A > 0$ depends on $u$.
Hence, there exists $B > 0$ such that 
\begin{eqnarray}
\label{lap1}
\Delta \phi(u(\zeta)) \ge B / \vert \zeta \vert
\end{eqnarray}
 near the origin in $\D$. Since 
everything depends smoothly on parameters, the same estimate holds for minimal discs of the above family close enough to the 
initial disc $u$. The tangent spaces of these discs fill an open non-empty subset of $Gr(2,n)$. By compactness of $Gr(2,n)$, we obtain that 
the estimate (\ref{lap1}) holds for all discs with uniform $A,B > 0$. The above family of minimal discs also smoothly depends on it 
starting point, the origin in our case. Moving (using Lemma \ref{LemNW2}) this family to a point $q$ close enough to the origin, we obtain by continuity
that the estimate (\ref{lap1}) holds for all minimal discs of this family, with uniform $B > 0$. This implies the first part of Lemma~\ref{LemLog} in view of Lemma \ref{LemPSH2}. The second part follows from the fact that for $q$ sufficiently close to $p$, we can choose normal coordinates $x^q$ at $q$, defined on $U$ (restricting $U$ if necessary), that are small smooth deformations of $x$ on $U$. Then the function $\log \vert x^q \vert + A \vert x^q \vert - \varepsilon |x^q|^2$ is MPSH on $U$ for some positive $\varepsilon$ not depending on $q$. Finally the coordinates $x-x(q)$ centered at $q$, defined on $U$, are small smooth deformations of the normal coordinates $x^q$ on $U$.

\qed

\section{Localization principle and preliminary boundary estimates}
Let $(M,g)$ be a Riemannian manifold of dimension $n$ and  let $D$ be a domain in $M$. We denote by $F_D(x,\xi)$ the value of the Kobayashi-Royden pseudometric at $(x,\xi) \in TM$. The definition of the Kobayashi-Royden metric is similar to that of the classical case of complex manifolds, replacing holomorphic discs with conformal harmonic immersed discs, see \cite{Ga-Su}. Denote by $\B^n$ the unit ball of $\R^n$. The main result of this section is the following localization principle for the Kobayashi-Royden metric. We follow the approach of  \cite{Ch-Co-Su} developed  for the case of 
complex  manifolds.

\begin{thm}\label{Kobest-thm}
\label{LocTheo}
Let $x: U \to 3\B^n$ be a normal coordinate neighborhood in $M$ centered at a point $p \in D$ (in particular, $x(p) = 0$). Assume that $U$ is small enough, in particular, $\vert x \vert^2$ is an MPSH function on $U$. Let $\rho$ be a negative MPSH function  in $D$ such that the function $\rho - \varepsilon \vert x \vert^2$ is MPSH on $D \cap U$ and $\vert \rho \vert \le B$ in $D \cap x^{-1}(2\B^n)$ for some constants $\varepsilon, B > 0$.
Then there exists a constant $C=C(\varepsilon,B) > 0$, independent of $\rho$, such that for every $w \in D \cap x^{-1}(\B^n)$ and every tangent vector $\xi \in T_w(M)$:
\begin{eqnarray*}
F_D(w,\xi) \ge C \vert \xi \vert \vert \rho(w) \vert^{-1/2}.
\end{eqnarray*}
\end{thm}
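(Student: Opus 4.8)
The plan is to reduce the statement to a gradient bound for conformal harmonic discs, to localize using that $\rho-\varepsilon|x|^2$ is bounded and bounded away from $0$ on a collar, and then to combine the strict minimal plurisubharmonicity of $\rho$ with a sub-mean-value inequality.

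\emph{Reduction.} By the definition of the Kobayashi--Royden metric and its homogeneity in $\xi$, it is enough to produce $C'=C'(\varepsilon,B)>0$ such that every conformal harmonic disc $f:\D\to D$ with $f(0)=w\in D\cap x^{-1}(\B^n)$ satisfies $\|df(0)(e_1)\|_g^2\le C'\,|\rho(w)|$: if $df(0)(e_1)=\alpha\xi$ with $\alpha>0$ this reads $1/\alpha\ge C'^{-1/2}\|\xi\|_g|\rho(w)|^{-1/2}$, and taking the infimum over competing discs gives the asserted bound (the localization constant below is absorbed into $C$). So I will bound $\|df(0)(e_1)\|_g$.

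\emph{Localization.} The function $\rho-\varepsilon|x|^2$ is MPSH on $D\cap U$, negative, satisfies $|\rho-\varepsilon|x|^2|\le B+4\varepsilon$ on $D\cap x^{-1}(2\B^n)$, and $\rho-\varepsilon|x|^2\le-\varepsilon$ on the collar $D\cap\{1\le|x|\le 2\}$. Following the localization technique of \cite{Ch-Co-Su}, adapted to conformal harmonic discs, this produces $c_{\mathrm{loc}}=c_{\mathrm{loc}}(\varepsilon,B)>0$ with $F_D(w,\xi)\ge c_{\mathrm{loc}}\,F_{D_1}(w,\xi)$ for $w\in D\cap x^{-1}(\B^n)$, where $D_1:=D\cap x^{-1}(2\B^n)$. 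It therefore suffices to bound $\|dg(0)(e_1)\|_g$ for conformal harmonic discs $g:\D\to D_1$ with $g(0)=w$. The gain is that such a $g$ has image in the bounded set $x^{-1}(2\B^n)$: hence $\rho\circ g$ is defined, subharmonic and bounded ($-B\le\rho\circ g<0$) on all of $\D$, and, written in the Euclidean coordinates of the chart, $g$ solves the system (\ref{harm-eq}) with uniformly bounded coefficients, so it enjoys a uniform interior $C^1$-estimate on $D(0,3/4)$.

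\emph{The two potential-theoretic inputs and conclusion.} Fix such a $g$. Since $\rho-\varepsilon|x|^2$ and $|x|^2$ are MPSH on $D\cap U\supset D_1$, Lemma~\ref{MPSH-lem} and formula (\ref{eq2}) give, for every $\zeta\in\D$,
\[
\Delta(\rho\circ g)(\zeta)\ \ge\ \varepsilon\,\Delta(|x|^2\circ g)(\zeta)\ =\ \varepsilon\sum_{\nu=1}^{2}\nabla^2\!\big(|x|^2\big)\big(g(\zeta)\big)\big(dg(\zeta)e_\nu,dg(\zeta)e_\nu\big)\ \ge\ c_0\varepsilon\,\|dg(\zeta)(e_1)\|_g^2 ,
\]
with $c_0>0$ depending only on the chart, because once $U$ is small $\nabla^2(|x|^2)$ is uniformly positive definite on $x^{-1}(2\B^n)$ (cf.\ Example~\ref{mpsh-ex}). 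On the other hand $\rho\circ g$ is subharmonic and negative on $\D$ with $(\rho\circ g)(0)=\rho(w)$; integrating $\frac{d}{ds}\big(\frac1{2\pi}\int_0^{2\pi}(\rho\circ g)(se^{i\theta})\,d\theta\big)=\frac1{2\pi s}\int_{D(0,s)}\Delta(\rho\circ g)\,dm$ from $0$ to $r<1$, using that the circular average is $\le0$, and letting $r\to1$, yields $\int_{D(0,1/2)}\Delta(\rho\circ g)\,dm\le\frac{2\pi}{\ln2}|\rho(w)|$; combining the two bounds, $\int_{D(0,1/2)}\|dg(\zeta)(e_1)\|_g^2\,dm\le\frac{2\pi}{c_0\varepsilon\ln2}|\rho(w)|$. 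Finally I pass from this $L^1$-bound to the value at $0$: in the Euclidean coordinates the squared conformal factor $\zeta\mapsto\|dg(\zeta)(e_1)\|_g^2$ is comparable to $|\partial g/\partial\zeta|^2$, and $\partial g/\partial\zeta$ is quasiholomorphic, $\partial_{\bar\zeta}(\partial g/\partial\zeta)=O(|\partial g/\partial\zeta|^2)$ by (\ref{harm-eq}) with constant controlled by the chart; since $g$ is valued in $x^{-1}(2\B^n)$ the interior $C^1$-estimate bounds $|\partial g/\partial\zeta|$ on $D(0,3/4)$, so an interior elliptic estimate (equivalently, a sub-mean-value inequality for $|\partial g/\partial\zeta|^2$ modulo a controlled error) gives $\|dg(0)(e_1)\|_g^2\le C_2\int_{D(0,1/2)}\|dg(\zeta)(e_1)\|_g^2\,dm$ with $C_2=C_2(\mathrm{chart})$. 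Hence $\|dg(0)(e_1)\|_g^2\le\frac{2\pi C_2}{c_0\varepsilon\ln2}|\rho(w)|$, and after re-inserting $c_{\mathrm{loc}}$ the theorem follows with $C=C(\varepsilon,B)$.

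The step I expect to be the main obstacle is the localization: making the comparison $F_D\gtrsim F_{D_1}$ quantitative, with a constant depending only on $\varepsilon$ and $B$, for conformal harmonic rather than holomorphic discs. The other ingredients are the interior regularity of conformal harmonic discs (as in \cite{Ga-Su}) and the elementary potential theory above.
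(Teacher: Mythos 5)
Your overall architecture (localize to $D_1=D\cap x^{-1}(2\B^n)$, then extract the $|\rho(w)|^{-1/2}$ gain from the strict inequality $\Delta(\rho\circ g)\ge\varepsilon\,\Delta(|x|^2\circ g)\gtrsim\varepsilon\|dg(\zeta)e_1\|_g^2$ together with a Riesz-mass bound for the negative subharmonic function $\rho\circ g$) is a genuinely different route from the paper's for the quantitative step, and the two potential-theoretic computations you do carry out (the lower bound on $\Delta(\rho\circ g)$ via formula (\ref{eq2}) and conformality, and the bound $\int_{D(0,1/2)}\Delta(\rho\circ g)\,dm\le\frac{2\pi}{\ln 2}|\rho(w)|$) are correct. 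However, there are two real gaps. First, the localization $F_D\ge c_{\mathrm{loc}}F_{D_1}$, which you cite to \cite{Ch-Co-Su} and yourself flag as the main obstacle, is precisely the technical heart of the paper's proof: Steps 1--3 there construct a global negative MPSH weight $\Psi_q=\psi(|x-x(q)|^2)e^{A\psi(|x-x(q)|)}e^{\lambda\rho}$ (the logarithmic pole is made MPSH by Lemma~\ref{LemLog}, the cutoff is compensated by $\lambda\rho$ with $\lambda=A/\varepsilon$, which is where the hypotheses on $\rho$ enter), deduce the preliminary bound $F_D(q,\xi)\ge N(\varepsilon,B)|\xi|$ on $x^{-1}(2\B^n)$, and convert it via the integrated form of $F_D$ into the Kobayashi-ball inclusion $B_D(q,\delta)\subset\{|x-x(q)|<\delta/N\}$, hence $g(r\D)\subset D\cap x^{-1}(2\B^n)$ for discs centered in $x^{-1}(\B^n)$. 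None of this is automatic for conformal harmonic discs, and your collar observations alone do not produce it; the construction has to be done.

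Second, your final step --- passing from $\int_{D(0,1/2)}\|dg(\zeta)e_1\|_g^2\,dm\lesssim|\rho(w)|/\varepsilon$ to the pointwise bound $\|dg(0)e_1\|_g^2\lesssim|\rho(w)|/\varepsilon$ --- is not justified as written. The sub-mean-value inequality for $|\partial g/\partial\zeta|^2$ via the similarity principle requires first linearizing $\partial_{\bar\zeta}(\partial g/\partial\zeta)=O(|\partial g/\partial\zeta|^2)$ into $O(|\partial g/\partial\zeta|)$, i.e.\ it presupposes the ``uniform interior $C^1$-estimate'' you invoke. Such an a priori gradient bound for harmonic maps with image merely contained in a bounded chart is false in general (energy can concentrate); it holds for images in sufficiently small or regular balls (this is the content of the paper's Lemma~\ref{Schwarz}, quoted from Jost), but that is an extra smallness hypothesis on $U$ beyond ``$|x|^2$ is MPSH'', and the resulting constant must be tracked. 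The paper's Step 4 avoids this entirely: it runs the maximum-principle argument a second time with a rescaled weight $\Phi_w=\psi(|x-x(w)|^2/\beta^2)e^{A\psi(|x-x(w)|)}e^{\lambda\rho}$, choosing $\beta^2=|\rho(w)|$ and $\lambda=C_3/(\varepsilon|\rho(w)|)$ so that $\log\Phi_w$ is still MPSH, and reads off $\alpha\ge e^{-A/2}e^{-C_3/2\varepsilon}r|\xi||\rho(w)|^{-1/2}$ directly from the subharmonicity of $\Phi_w(f(z))/|z|^2$ on $r\D$ --- no interior regularity theory for the disc is needed. To complete your argument you would either need to import and justify the Schwarz-type lemma for harmonic discs in this setting, or replace your last step by a weight-function argument of this kind.
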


The coordinate neighborhood $U$ is not assumed to be contained in $D$. Therefore, this result gives a first (non optimal) asymptotic behavior estimate of $F_D$ near the boundary of $D$ in $M$. Note also that no conditions such as boundedness or Kobayashi hyperbolicity are imposed on $D$.

\vspace{2mm}
\noindent{\sl Proof of Theorem~\ref{LocTheo}.} The proof consists of several steps.

\bigskip

{\bf Step 1. Construction of suitable MPSH functions.}
Consider a smooth nondecreasing function $\psi$ on $\R_+$ satisfying $\psi(t) = t$ for $0 \le t \le 1/2$ and $\psi(t) = 1$ for $t \ge 3/4$. For each point $q \in M$ satisfying $\vert x(q) \vert < 2$, we define the function 
$\Psi_q =  \psi (\vert x - x(q) \vert^2) \exp(A\psi(\vert x - x(q) \vert)) \exp(\lambda \rho)$ on $D \cap U$, and 
$\Psi_q = \exp(A\psi(\vert x - x(q)\vert^2)) \exp(\lambda \rho)$ on $D \setminus U$;  the positive constants $A$ and  $\lambda$ will be chosen later.

The function $\log \Psi_q (x) = \log \psi (\vert x - x(q) \vert^2) + A \psi(\vert x - x(q) \vert) + \lambda \rho$  is MPSH on $D \setminus \{ \vert x - x(q) \vert^2 \le 3/4 \}$.  On the other hand, according to Lemma~\ref{LemLog} there exists a constant $A > 0$ such that the function $\log \psi(\vert x - x(q) \vert^2) + A \vert x - x(q) \vert + A \vert x \vert^2$ is MPSH on $U$.  Moreover, by assumption the function $\rho - \varepsilon \vert x \vert^2$ is MPSH on $D \cap \{ \vert x - x(q) \vert \le 1 \}$. Hence, taking $\lambda = A/\varepsilon$, we obtain that the function $\log \Psi_q$ is MPSH on $D \cap \{ \vert x - x(q) \vert \le 1 \}$ and, therefore, everywhere on $D$.

\bigskip

{\bf Step 2. Preliminary  estimate of the Kobayashi-Royden metric.}
Let $u : \D \to D$ be a conformal harmonic mapping satisfying $u(0) = q$ with $\vert x(q) \vert < 2$. Then the function $\phi(\zeta) = \Psi_q(u(\zeta))/\vert \zeta \vert^2$ is subharmonic on the punctured disc $\D \setminus \{ 0 \}$ and is upper bounded by  $\exp(A)$ as $\zeta$ tends to the unit circle.

Without loss of generality, we assume that the local coordinates are normal at $q$. Since $u$ is a conformal map, it is easy to see that $\lim_{\zeta \to 0} \phi(\zeta) = \vert du(0)e_1) \vert^2\exp (A\rho(q)/\varepsilon)$, where $e_1 = (1,0)$. Hence, $\phi$ extends on $\D$ as a subharmonic function. By the maximum principle for subharmonic functions, we have $\vert du(0)e_1) \vert^2 \le \exp(A)\exp(-A\rho(q)/\varepsilon)$. Now by the definition of the Kobayashi-Royden metric, we obtain the estimate

\begin{eqnarray}
\label{est1}
F_D(q,\xi) \ge \exp((-A+A \rho(q))/2\varepsilon) \vert \xi \vert \ge N(\varepsilon,B) \vert \xi \vert
\end{eqnarray}
for any $q \in D \cap x^{-1}(2\B)$ and $\xi \in T_qD$. Here $N(\varepsilon, B) = \exp((-A - AB)/2\varepsilon)$.

\bigskip

{\bf Step 3. Localization of the Kobayashi balls on $D$. }
As it is shown in \cite{Ga-Su}, $F_D$ is an upper semi-continuous function on the tangent bundle $T_D$, and the Kobayashi distance  $d_D$ of $D$ is the integrated form of $F_D$ i.e., for any points $p,q \in D$ we have

\begin{eqnarray}
\label{Roy}
d_D(p,q) = \inf_{\gamma \in \Gamma(p,q)} \int_0^1 F_D(\gamma(t), d\gamma(t))dt
\end{eqnarray}
where the infimum is taken over the set $\Gamma(p,q)$ of all $C^1$- smooth paths $\gamma: [0,1] \longrightarrow D$ with $\gamma(0) = p$, $\gamma(1) = q$. Denote by $B_D(q,\delta)$ the Kobayashi ball with respect to $d_D$, centered at $q \in D$ and of radius $\delta > 0$. We want to compare $B_D(q,\delta)$ with a suitable ball with respect to the Euclidean ball (in local coordinates). This in turn allows to control a distortion of conformal harmonic discs giving the desired localization of the Kobayashi-Royden pseudometric.

\begin{lemma}
\label{LocLemma}
For any point $q $ in $D \cap x^{-1}(\B^n)$ and any $\delta \le N=N(\varepsilon,B)$, the Kobayashi ball $B_D(q,\delta)$ is contained in $D \cap \{ \vert x - x(q) \vert < \delta/N \}$.
\end{lemma}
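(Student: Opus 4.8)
\textbf{Proof strategy for Lemma~\ref{LocLemma}.}
The plan is to use the lower estimate~(\ref{est1}) for $F_D$, valid uniformly on $D \cap x^{-1}(2\B^n)$, together with the integrated form~(\ref{Roy}) of $d_D$. The key point is that~(\ref{est1}) gives $F_D(q',\xi) \ge N |\xi|$ for all $q'$ in the region $D \cap x^{-1}(2\B^n)$, where the norm $|\xi|$ is measured in the normal coordinates $x$; so on this region $d_D$ dominates $N$ times the Euclidean distance in the $x$-chart. Fix $q \in D \cap x^{-1}(\B^n)$ and $\delta \le N$, and let $q'' \in B_D(q,\delta)$. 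I would argue by contradiction: suppose $|x(q'') - x(q)| \ge \delta/N$. Since $\delta \le N$, we have $\delta/N \le 1$, so the straight segment from $x(q)$ to $x(q'')$ in coordinates, as long as it stays in the ball of radius $1$ around $x(q)$, lies in $x^{-1}(2\B^n)$ (because $|x(q)| < 1$).

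Now take any $C^1$ path $\gamma : [0,1] \to D$ from $q$ to $q''$. Let $t_0 \in (0,1]$ be the first time the coordinate curve $x(\gamma(t))$ reaches the sphere $\{|x - x(q)| = \delta/N\}$; such a time exists since $x(\gamma(0)) = x(q)$ has $|x(\gamma(0)) - x(q)| = 0 < \delta/N$ and $|x(\gamma(1)) - x(q)| \ge \delta/N$. On $[0,t_0]$ the curve $\gamma$ stays in $D \cap x^{-1}(2\B^n)$ since $\delta/N \le 1$ and $|x(q)| < 1$ give $|x(\gamma(t))| < 2$. Therefore, using~(\ref{est1}) along $\gamma|_{[0,t_0]}$,
\begin{eqnarray*}
\int_0^1 F_D(\gamma(t), d\gamma(t))\,dt \ge \int_0^{t_0} F_D(\gamma(t), d\gamma(t))\,dt \ge N \int_0^{t_0} \left| \frac{d}{dt} x(\gamma(t)) \right|\,dt \ge N \cdot \frac{\delta}{N} = \delta,
\end{eqnarray*}
the last inequality because the coordinate curve travels from $x(q)$ to a point at distance exactly $\delta/N$. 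Taking the infimum over all such $\gamma$ gives $d_D(q,q'') \ge \delta$, contradicting $q'' \in B_D(q,\delta)$. Hence $|x(q'') - x(q)| < \delta/N$, which is the assertion.

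The only genuinely delicate point is making sure that the portion of the path used in the estimate stays inside the region $D \cap x^{-1}(2\B^n)$ where~(\ref{est1}) is available; this is guaranteed by the hypotheses $q \in x^{-1}(\B^n)$ and $\delta \le N$ (so $\delta/N \le 1$), via the triangle inequality $|x(\gamma(t))| \le |x(q)| + |x(\gamma(t)) - x(q)| < 1 + 1 = 2$ on $[0,t_0]$. One should also note that $F_D$ being only upper semi-continuous is no obstacle: the inequality~(\ref{est1}) is a pointwise lower bound, so it integrates without any regularity concern, and the definition~(\ref{Roy}) of $d_D$ as an infimum of integrals is exactly what is needed. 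This completes the proof.

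\qed
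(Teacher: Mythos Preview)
Your argument is essentially the paper's own: both use the pointwise lower bound~(\ref{est1}) on $D \cap x^{-1}(2\B^n)$ together with the integral formula~(\ref{Roy}), and both bound from below the Euclidean length of the portion of an arbitrary path $\gamma$ lying in a coordinate ball around $q$. You phrase it via a first-exit-time $t_0$ and a contradiction, whereas the paper instead proves the two-sided estimate $d_D(w,q)\ge N\min\{1,|x(w)-x(q)|\}$ (and $d_D(w,q)\ge N$ for $w\notin U$) and then reads off the conclusion; these are reformulations of the same idea.

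There is one small omission you should patch. When you write ``suppose $|x(q'')-x(q)|\ge\delta/N$'' and ``$|x(\gamma(1))-x(q)|\ge\delta/N$'', you are tacitly assuming $q''\in U$, so that $x(q'')$ is defined. But a priori a point of $B_D(q,\delta)$ could lie in $D\setminus U$, and this case must be covered to conclude $B_D(q,\delta)\subset U$. The fix is immediate with your own machinery: if $q''\notin U$ (or more generally $q''\notin x^{-1}(2\B^n)$), then any path $\gamma$ from $q$ to $q''$ must exit $x^{-1}(2\B^n)$, hence by continuity there is a first time $t_0$ with $|x(\gamma(t_0))-x(q)|=\delta/N$ (indeed $|x(\gamma(\cdot))-x(q)|$ starts at $0$ and must reach at least $1\ge\delta/N$ before $\gamma$ can leave $x^{-1}(2\B^n)$), and your integral estimate on $[0,t_0]$ applies verbatim to give $d_D(q,q'')\ge\delta$, the same contradiction. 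The paper handles this case separately as well, obtaining $d_D(w,q)\ge N$ for $w\in D\setminus U$.
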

\proof Fix a point $w \in D$. Setting $G = \{ \tilde w \in U: \vert x(\tilde w)  - x(q) \vert  < 1 \}$, we obtain from (\ref{est1}) and (\ref{Roy}) that

\begin{eqnarray*}
d_D(w,q) \ge  \inf_{\gamma \in \Gamma(q,w)} \int_{\gamma^{-1}(G)} F_D(\gamma(t),d\gamma(t))dt \ge N \inf_{\gamma \in \Gamma(q,w)} \int_{\gamma^{-1}(G)} \vert d\gamma(t) \vert dt.
\end{eqnarray*}
Given a path $\gamma$, the last integral represents the Euclidean length  of the part of $\gamma([0,1])$ contained in $G$. 

\vspace{1mm}
\noindent{\bf Claim.} For $w \in G$ one has 
$$\inf_{\gamma \in \Gamma(q,w)} \int_{\gamma^{-1}(G)} \vert d\gamma(t) \vert dt \geq \vert x(w) - x(q) \vert.$$

\vspace{1mm}
To prove the Claim, there are two cases to consider.
First, if the path $\gamma$ is contained in $G$, then obviously its length is not smaller than $\vert x(w) - x(q) \vert$. 
Second, if $\gamma$ intersects the boundary of $G$, then the length of a connected component of $\gamma$ joining $q$ and a boundary point of $G$ is larger than or equal to $1$, which is larger than or equal to $\vert x(w) - x(q) \vert$. This proves the Claim.

\vspace{1mm}
Finally, if $w$ is not in $G$ (for example, if $w$ is not in $U$), then for every $\gamma \in \Gamma(q,w)$, the length $\int_{\gamma^{-1}(G)} \vert d\gamma(t) \vert dt$ is bounded from below by 1.
Therefore, we have the estimates

\begin{eqnarray}
\label{est2}
d_D(w,q) \ge N \min \{ 1, \vert x(w) - x(q) \vert \}, \,\,\, w \in D \cap U
\end{eqnarray}
and 
\begin{eqnarray}
\label{est3}
d_D(w,q) \ge N, \,\,\, w \in D \setminus U.
\end{eqnarray}
Now it follows from (\ref{est2}) and (\ref{est3}) that the condition  $w \in B_D(q,\delta)$  implies that $w \in U$ and that $\vert x(w) - x(q) \vert < \delta/N$.
\qed

\bigskip

{\bf Step 4. Precise estimate on $F_D$.}  
Consider a smooth function $\psi$ defined quite similarly as in Step 1 
and satisfying $\psi(t) = t$ for $t \le 1/2$ and $\psi(t) = 1$ when $t \ge 1$.  For every $w \in D \cap x^{-1}(\B^n)$ and every $A, \lambda > 0$ and $0 < \beta \leq \sqrt{B}$, consider the function 
$$
\Phi_{A,\lambda,\beta,w}(x) = \psi(\vert x - x(w) \vert^2/\beta^2) \exp(A\psi(\vert x - x(w) \vert)) \exp(\lambda \rho(x)).
$$
This function is well-defined on $D \cap U$ and takes its values in $[0,e^A]$.

- According to Step 1, we can choose $A$ and $C_0>0$ such that the function $\log \psi (|x-x(q)|^2/\beta^2) + A \psi(|x-x(q)| + C_0 |x|^2$ is MPSH on $\{|x-x(q)|^2/\beta^2 \leq 1/2\}$.

- On the set $\{|x-x(q)|^2/\beta^2 \geq 1\}$, we have $\log \Phi_{A,\lambda,\beta,w} (x) = A \psi(|x-x(q)|) + \lambda \rho$ and there exists $C_1>0$, depending only on $\psi$, such that $A \psi(|x-x(q)|) + C_1 |x|^2$ is MPSH on $\{|x-x(q)|^2/\beta^2 \geq 1\}$.

- On the set $\{1/2 \leq |x-x(q)|^2/\beta^2 \leq 1\}$, we have $1/2 \leq \psi(|x-x(q)|^2/\beta^2) \leq 1$ and there exists $C_2 > 0$, depending only on the first and second order derivatives of $\psi$, such that all the second order derivatives of the function $\log \psi(|x-x(q)|^2/\beta^2)$ are bounded between $-C_2/\beta^2$ and $C_2/\beta^2$. Hence there exists $C_3>0$ such that the function $\log \psi(|x-x(q)|^2/\beta^2) + A \psi(|x-x(q)|) + (C_3/\beta^2)|x|^2$ is MPSH on $\{1/2 \leq |x-x(q)|^2/\beta^2 \leq 1\}$.

Recall that we have supposed  $\beta^2 \leq B$; hence we can assume, increasing $C_3$ if necessary, that the function $\log \psi(|x-x(q)|^2/\beta^2) + A \psi(|x-x(q)|) + (C_3/\beta^2)|x|^2$ is MPSH on $D \cap U$. Hence the function $\log \Phi_{\lambda,\beta,w} + (C_3/\beta^2 - \lambda\varepsilon) \vert x \vert^2$ is MPSH in $D \cap U$. Now set 
$\lambda = C_3/(\varepsilon\vert \rho(w) \vert)$ and $\beta^2 = \vert \rho (w) \vert$ . Since $w \in D \cap x^{-1}(\mathbb B^n)$ we have $\beta^2 \leq B$ indeed, and we obtain a function denoted by $\Phi_w$ such that $\log \Phi_w$ is MPSH on $D \cap U$.


Set $r = (e^{2N} -1)/(e^{2N} + 1)$, so that the Poincar\'e radius of the disc $r\D$ in $\D$ is equal to $N$. It follows by Lemma \ref{LocLemma} that for each conformal harmonic mapping $g:\D \longrightarrow D$ such that $w = g(0)  \in D \cap x^{-1}(\B^n)$, one has the inclusion $g(r\D) \subset 
D \cap x^{-1}(2\B^n)$. Let $f: \D \longrightarrow D$ be a conformal harmonic mapping satisfying $f(0) = w$ and $df(0)e_1 = \alpha^{-1} \xi$, where $\alpha > 0$ and 
$\xi \in T_wD$. Then the function 
$$v(\zeta) = \Phi_w(f(\zeta))/\vert \zeta \vert^2$$
is subharmonic on $r\D \setminus \{ 0 \}$. Furthermore, $\lim\sup_{\zeta \to 0} v(\zeta) = \vert \xi \vert^2/(e^{C_3/\varepsilon} \vert \rho(w) \vert \alpha^2)$ (we choose normal coordinates at $w$ as above). 
Therefore, $v$ extends on $r\D$ as a subharmonic function. By the maximum principle, we have : $\alpha \ge e^{-A/2} e^{-C_3/2\varepsilon} r \vert \xi \vert \vert \rho(w) \vert^{-1/2}$.
By the definition of the Kobayashi-Royden metric we obtain the estimate
$$
F_D(w,\xi) \ge e^{-A/2} e^{-C_3/2\varepsilon} r \vert \xi \vert \vert \rho(w) \vert^{-1/2}.
$$
This completes the proof of Theorem~\ref{Kobest-thm}, setting $C=e^{-A/2} e^{-C_3/2\varepsilon} r$. \qed

\vspace{2mm}
As a first application, we obtain the following result.

\begin{thm}
\label{DiscTheo}
Let $(M,g)$ be a Riemannian manifold, $\rho$ be an MPSH function on $M$, and let $u: \D \to M$ be a 
conformal harmonic immersed disc such that $\rho \circ u \ge 0$ on $\D$ and $(\rho \circ u)(\zeta) \to 0$ as $\zeta \in \D$ tends to an open arc $\gamma \subset b\D$.
Assume that for a certain point $a \in \gamma$ the cluster set $C(f,a)$ contains a point $p \in M$ such that $\rho$ is strictly MPSH  near $p$.
Then $u$ extends to a neighborhood of $a$ in $\D \cup \gamma$ as a H\"older 1/2-continuous map.
\end{thm}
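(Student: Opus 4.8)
The plan is to reduce the H\"older-$1/2$ extension to a Hopf-lemma-type estimate for the subharmonic function $\rho\circ u$ near the arc $\gamma$, combined with the lower bound on the Kobayashi-Royden metric furnished by Theorem~\ref{LocTheo}. First I would set up normal coordinates $x$ near $p$ on a small ball $x^{-1}(3\B^n)$ chosen so that $|x|^2$ is MPSH there and, shrinking further, so that $\rho-\varepsilon|x|^2$ is MPSH on this neighborhood for some $\varepsilon>0$ (this uses that $\rho$ is \emph{strictly} MPSH near $p$). Let $D$ be a small piece of $u(\D)$-target, or more precisely work with the pulled-back domain: the hypotheses $\rho\circ u\ge 0$ on $\D$ and $(\rho\circ u)(z)\to 0$ as $z\to\gamma$ say that $-\rho\circ u$ plays the role of a negative plurisubharmonic exhaustion-type function vanishing on the boundary arc. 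The key point is to get a two-sided control: a lower bound $\rho\circ u(z)\gtrsim \dist(z,\gamma)$ would follow if $\rho\circ u$ were, say, comparable to a positive harmonic function vanishing on $\gamma$, but that is not available a priori; instead I would use the \emph{upper} bound, which is what actually drives the H\"older estimate.

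The main mechanism: fix $a\in\gamma$ with $p\in C(u,a)$. Take a sequence $z_k\to a$ with $u(z_k)\to p$. Around each $z_k$, consider the largest disc $D(z_k,r_k)\subset\D$; then $r_k\asymp\dist(z_k,\gamma)=:d_k\to 0$. Apply Theorem~\ref{LocTheo} to the domain $D:=u^{-1}(x^{-1}(3\B^n))\cap(\text{component containing }z_k)$ inside the surface, or rather—cleaner—apply the Kobayashi-metric estimate on the target: with $\rho':=-\,\rho$ restricted suitably we do not get a negative function, so instead I would run the localization argument directly on $\D$ using the subharmonic function $\Phi=\psi(|x\circ u-x(u(z_k))|^2/\beta^2)\exp(A\psi(|x\circ u-\cdots|))\exp(-\lambda\,\rho\circ u)$ as in Step 4 of Theorem~\ref{LocTheo}'s proof, with $\beta^2=\dist(u(z_k),\partial)$-type quantity. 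Concretely, the distance-decreasing property of the Kobayashi metric under $u:\D\to M$ gives, for $z,z'$ near $a$,
$$
d_{x^{-1}(3\B^n)\cap(\rho>0)}\bigl(u(z),u(z')\bigr)\ \le\ d_\D(z,z')\ \le\ C\,\frac{|z-z'|}{1-|z|^2}\quad\text{locally,}
$$
while Theorem~\ref{LocTheo}, applied with the negative MPSH function $-\psi(\rho)$ (truncate $\rho$ by a concave $\psi$ so that $-\psi(\rho)$ is negative MPSH and bounded, and $-\psi(\rho)-\varepsilon'|x|^2$ stays MPSH), yields $F_D(w,\xi)\ge C|\xi|\,|\rho(w)|^{-1/2}$ near $p$. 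Integrating this lower bound along paths shows that the $F_D$-distance from a point $w$ with $\rho(w)=\delta$ to the level set $\{\rho=2\delta\}$ is $\gtrsim \sqrt{\delta}$, hence $\rho\circ u(z)\lesssim (\dist_\D(z,\gamma))^{?}$—and matching this against the Poincar\'e metric blow-up $1/(1-|z|)$ forces $\rho\circ u(z)\lesssim\dist(z,\gamma)$ near $a$…

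Actually the correct and standard route is: the lower bound $F_D\ge C|\xi|\,|\rho|^{-1/2}$ integrates to $d_D(w,w_0)\ge C'(\sqrt{\rho(w_0)}-\sqrt{\rho(w)})$ for $w$ deeper than $w_0$, i.e. the $d_D$-ball of radius $R$ about a point at depth $\delta$ stays inside $\{\rho\ge(\sqrt\delta-R/C')^2\}$. Pull back by $u$: $d_D(u(z),u(z'))\le d_\D(z,z')$, and for $z$ near $a$ the Poincar\'e distance to a fixed interior reference point is $\sim\log\frac1{1-|z|}$, which does \emph{not} blow up fast enough by itself—so one instead compares $u(z)$ and $u(z')$ for $z,z'$ both near $a$: then $d_\D(z,z')\le \frac{C|z-z'|}{\dist(z,\gamma)}$, and the lower bound on $d_D$ gives $|\sqrt{\rho(u(z))}-\sqrt{\rho(u(z'))}|\le \frac{C|z-z'|}{\sqrt{\dist(z,\gamma)}\cdot(\cdots)}$; choosing $|z-z'|\asymp\dist(z,\gamma)$ and iterating a dyadic chain of points approaching $\gamma$ yields $\rho(u(z))\le C\,\dist(z,\gamma)$ near $a$. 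This is the crucial gradient-type estimate.

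Once $\rho\circ u(z)\le C\,\dist(z,\gamma)$ is established near $a$, I would finish by a now-classical argument: since $x\circ u$ solves (in suitable local coordinates) a quasilinear elliptic system of the form $\Delta(x^j\circ u)=$ (quadratic in first derivatives) with bounded coefficients (from the harmonic-map equation \eqref{harm-eq}), and since $|x\circ u - x(p)|$ is controlled by $\rho\circ u$ plus lower-order convex terms (because $\rho$ is strictly MPSH, hence strictly convex-like, near $p$, so $\rho(x)\gtrsim |x-x(p)|^2$ up to an affine term—handle the linear part using that $\nabla\rho(p)$ can be absorbed or that $p$ is a boundary point where $\rho\circ u=0$ is a minimum, forcing the relevant directional derivative to vanish), one gets $|u(z)-p|^2\lesssim \dist(z,\gamma)$ plus harmonic correction terms, i.e. $|u(z)-p|\lesssim\dist(z,\gamma)^{1/2}$. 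Then interior Schauder/gradient estimates for the elliptic system \eqref{harm-eq} on the discs $D(z,\tfrac12\dist(z,\gamma))$ upgrade this to $|du(z)|\lesssim\dist(z,\gamma)^{-1/2}$, and integrating the gradient along paths in $\D$ ending on $\gamma$ gives the H\"older-$1/2$ continuity of $u$ up to $\gamma$ near $a$. The main obstacle I anticipate is the middle step—getting from the metric lower bound of Theorem~\ref{LocTheo} to the linear decay $\rho\circ u(z)\lesssim\dist(z,\gamma)$—because the Poincar\'e metric alone decays too slowly, so the dyadic-chain comparison of nearby boundary-approaching points must be done carefully, and one must also arrange (via truncating $\rho$ by a concave function) that the hypotheses of Theorem~\ref{LocTheo} are genuinely met by a \emph{negative, bounded} MPSH function on the relevant localized domain.
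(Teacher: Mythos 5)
Your overall skeleton --- a linear decay estimate for $\rho\circ u$ near $\gamma$, a derivative bound $|du(z)|\le C\,\dist(z,b\D)^{-1/2}$, and a Hardy--Littlewood integration --- is indeed the Chirka--Coupet--Sukhov scheme that the paper invokes (it gives no written proof, only this citation). But both of your substantive middle steps are off. First, the estimate $\rho\circ u(z)\le C\,\dist(z,\gamma)$ should not, and cannot, be extracted from the Kobayashi-metric lower bound: the function to which Theorem~\ref{LocTheo} applies on a sublevel domain $D_t=\{\rho<t\}\cap U$ is $\rho-t$, whose modulus $t-\rho$ is \emph{large} (comparable to $t$) precisely where $\rho$ is small, so the bound $F_{D_t}(w,\xi)\ge C|\xi|\,(t-\rho(w))^{-1/2}$ degenerates rather than blows up near $\{\rho=0\}$; your dyadic chain has nothing to iterate, and you yourself flag this as the ``main obstacle'' without resolving it. The correct (and elementary) argument is the two-constants/harmonic-measure estimate: $\rho\circ u$ is a bounded nonnegative subharmonic function on a half-disc at $a$ whose boundary values vanish on the $\gamma$-part, hence $\rho\circ u(z)\le M\,\omega(z)\le C\,\dist(z,b\D)$, where $\omega$ is the harmonic measure of the inner arc. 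No Kobayashi metric is needed for this step.

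Second, your proposed finish --- deducing $|u(z)-p|\lesssim\dist(z,\gamma)^{1/2}$ from $\rho\circ u\lesssim\dist(z,\gamma)$ together with ``strict MPSH implies $\rho\gtrsim|x-x(p)|^2$'' --- fails. When $d\rho(p)\neq0$ the zero set of $\rho$ is a hypersurface through $p$, and points on it at fixed positive distance from $p$ still satisfy $\rho=0$; moreover strict MPSH only bounds traces of the Hessian on $2$-planes from below and does not make $\rho$ comparable to $|x-x(p)|^2$ even at a critical point (e.g.\ $x_1^2+x_2^2-\tfrac12x_3^2$). The tangential displacement of $u$ along $\{\rho=0\}$ is exactly what Theorem~\ref{LocTheo} exists to control. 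The correct step is: for $z$ near $a$ with $d=\dist(z,b\D)$, the linear decay gives $u(D(z,d/2))\subset D_{C'd}$; applying Theorem~\ref{LocTheo} on $D_{C'd}$ with the negative MPSH function $\rho-C'd$ (so $|\rho-C'd|\le C'd$ on the image, with $\varepsilon$ and $B$ uniform in $d$), combined with the distance-decreasing property $F_{D_{C'd}}(u(z),du(z)e_1)\le 2/d$, yields $|du(z)|\le C\,d^{-1/2}$ in one line. You must also justify that $u(z)$ remains in the coordinate chart $U$ for \emph{all} $z$ in a half-neighborhood of $a$, not merely along the sequence realizing $p\in C(u,a)$ --- a connectedness/bootstrap point your write-up does not address.
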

The proof follows from Theorem \ref{LocTheo} via the same argument as in \cite{Ch-Co-Su} so we skip it. In particular, we have the following

\begin{cor}
Let $\rho$ be a strictly MPSH function on a Riemannian manifold $(M,g)$. Set $M^+ := \{\rho > 0\}$ and $\Gamma = \{\rho = 0\}$. 
Assume that $u:\D \to M^+$ is a conformal harmonic immersed disc such that the cluster set $C(u,\gamma)$ is contained in $\Gamma$ for some open non-empty arc $\gamma \subset b\D$. Then $u$ extends on $\D \cup \gamma$ as a H\"older 1/2-continuous map.
\end{cor}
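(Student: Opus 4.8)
The plan is to derive the Corollary directly from Theorem~\ref{DiscTheo} by choosing an appropriate point in the cluster set and an appropriate subarc. First I would observe that the hypothesis of the Corollary is the special case of Theorem~\ref{DiscTheo} in which $\rho$ is \emph{strictly} MPSH on all of $M$, so that the extra condition ``$\rho$ is strictly MPSH near $p$'' in Theorem~\ref{DiscTheo} is automatic for every $p \in M$. Indeed, set $u : \D \to M^+ = \{\rho > 0\}$. Then $\rho \circ u > 0$ on $\D$, so in particular $\rho \circ u \ge 0$ on $\D$, and since $C(u,\gamma) \subset \Gamma = \{\rho = 0\}$, for any sequence $z_j \to a \in \gamma$ we have $\rho(u(z_j)) \to 0$; since $\rho$ is continuous and $u$ is bounded near $\gamma$ (after shrinking, using that cluster values lie in the hypersurface $\Gamma$ which is contained in a compact piece of $M$), one gets $(\rho \circ u)(z) \to 0$ as $z \to \gamma$. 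So the first two hypotheses of Theorem~\ref{DiscTheo} are satisfied on the arc $\gamma$.

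Next I would handle the cluster-set hypothesis of Theorem~\ref{DiscTheo}. Pick any point $a \in \gamma$ and any $p \in C(u,a)$; such a $p$ exists and lies in $\Gamma$, and by assumption $\rho$ is strictly MPSH in a neighborhood of $p$ (it is strictly MPSH everywhere). Shrink $\gamma$ to a slightly smaller open arc $\gamma'$ still containing $a$, so that the cluster set over $\gamma'$ remains inside $\Gamma$ and inside a coordinate neighborhood of $p$ on which the needed estimates hold. Then Theorem~\ref{DiscTheo} applies verbatim and yields that $u$ extends to a neighborhood of $a$ in $\D \cup \gamma'$ as a H\"older-$1/2$ map.

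Finally I would pass from the local statement at a point to the global statement on the whole arc $\gamma$. The previous paragraph shows that every point $a \in \gamma$ has an open neighborhood $V_a$ in $\D \cup \gamma$ on which $u$ admits a H\"older-$1/2$ extension; these local extensions agree on overlaps because they all restrict to $u$ on $V_a \cap \D$, which is open and dense in $V_a$, and a continuous extension of a given map to a fixed set is unique. Patching them together gives a well-defined extension of $u$ to $\D \cup \gamma$. H\"older-$1/2$ continuity is a local property and the H\"older seminorm on a compact subarc is controlled by the finitely many local bounds from a finite subcover, so the extension is (locally uniformly, hence on compact subsets) H\"older-$1/2$ on $\D \cup \gamma$.

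The main obstacle, such as it is, is the passage from the pointwise conclusion of Theorem~\ref{DiscTheo} to a statement over the entire arc: one must make sure the local coordinate neighborhoods and the constant $B$ controlling $|\rho|$ can be chosen uniformly along $\gamma$. This is where I would invoke that the cluster set $C(u,\gamma)$ is a compact subset of the hypersurface $\Gamma$ on which $\rho$ is strictly MPSH, cover it by finitely many of the neighborhoods furnished by Theorem~\ref{LocTheo}/Theorem~\ref{DiscTheo}, and extract uniform constants; everything else is the gluing argument above, which is routine.
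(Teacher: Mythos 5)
Your proposal is correct and follows exactly the route the paper intends: the Corollary is stated as an immediate particular case of Theorem~\ref{DiscTheo} (strict minimal plurisubharmonicity of $\rho$ on all of $M$ makes the ``strictly MPSH near $p$'' hypothesis automatic, and $C(u,\gamma)\subset\Gamma$ gives $\rho\circ u\to 0$), and your patching of the local H\"older extensions along $\gamma$ is the routine step the authors leave implicit. The only point worth flagging, which the paper also glosses over, is that deducing $(\rho\circ u)(z)\to 0$ from the cluster-set hypothesis tacitly uses that $u$ has relatively compact image near $\gamma$ (equivalently, that the cluster set over each subarc is nonempty and compact), exactly as you noted in passing.
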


\section{Complete hyperbolicity of strictly pseudoconvex domains}

In this section $\nabla$ denotes the gradient,  We recall that :
\begin{itemize}
\item[(a)] a manifold $M$ is (Kobayashi) hyperbolic if the pseudistance $d_g$ is a distance on $TM$,
\item[(b)] a manifold $M$ is complete hyperbolic if the metric space $(M,d_{g})$ is complete.
\end{itemize}


Here we obtain one  of our main results.

\begin{thm}
\label{TheoNorm}
Let $\Omega$ be a  relatively compact domain in $(M,g)$. Assume that $\rho$ is a  strictly MPSH $C^2$ function in a neighborhood of  $\overline\Omega$
such that $\Omega  = \{ \rho < 0 \}$ and 
$d\rho \neq 0$ on $b\Omega$. Then $\Omega$ is a complete hyperbolic domain.
\end{thm}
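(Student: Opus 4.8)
The plan is to derive complete hyperbolicity from two ingredients: the boundary lower bound for the Kobayashi--Royden metric coming from Theorem~\ref{LocTheo}, and an interior positivity of $F_\Omega$ following from relative compactness. First I would establish a negative global MPSH exhaustion-type function adapted to the hypotheses: since $\rho$ is strictly MPSH near $\overline\Omega$ and $d\rho\neq 0$ on $b\Omega$, for each boundary point $p$ there are normal coordinates $x$ at $p$ and $\varepsilon>0$ with $\rho-\varepsilon|x|^2$ MPSH near $p$; by compactness of $b\Omega$ one can take a finite cover and a uniform $\varepsilon>0$, and take $\rho$ itself as the negative MPSH function on $\Omega$, bounded on $\Omega\cap x^{-1}(2\B^n)$ since $\rho$ is continuous on the compact set $\overline\Omega$. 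Then Theorem~\ref{LocTheo} applies at each boundary coordinate chart and yields, for $w$ near $b\Omega$ and $\xi\in T_wM$,
\[
F_\Omega(w,\xi)\;\ge\; C\,|\xi|\,|\rho(w)|^{-1/2},
\]
with $C>0$ uniform along $b\Omega$ (using the uniform $\varepsilon$ and the uniform bound $B$ on $|\rho|$).

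Next I would prove hyperbolicity, i.e.\ that $d_\Omega$ is a genuine distance. Near $b\Omega$ the estimate above already gives $F_\Omega\ge c|\xi|$ for some $c>0$ (since $|\rho|$ is bounded on $\overline\Omega$). On any compact subset $K\Subset\Omega$ one shows $F_\Omega(w,\xi)\ge c_K|\xi|$: this is where I expect the main obstacle, namely producing a genuinely positive interior bound without a priori hyperbolicity. The approach is the standard one, adapted to conformal harmonic discs: using Example~\ref{mpsh-ex}, for $w\in\Omega$ the function $|x-x(w)|^2$ is MPSH in a fixed normal-coordinate ball around $w$ (radius uniform for $w\in K$ by compactness), so a Schwarz-type argument --- compose a conformal harmonic disc $f$ with $f(0)=w$ against this MPSH function, which is subharmonic and bounded, and use that $\Omega$ is bounded in these coordinates --- bounds $|df(0)e_1|$ from above, giving $F_\Omega(w,\xi)\ge c_K|\xi|$. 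Combining the interior and boundary bounds, $F_\Omega(w,\xi)\ge \phi(w)|\xi|$ for a positive continuous (or lower semicontinuous) function $\phi$ on $\Omega$; integrating via \eqref{Roy} shows $d_\Omega(p,q)>0$ for $p\neq q$, so $\Omega$ is hyperbolic.

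Finally, for completeness I would show that every $d_\Omega$-Cauchy sequence converges in $\Omega$. Suppose $(q_k)$ is $d_\Omega$-Cauchy; since $\overline\Omega$ is compact, after passing to a subsequence $q_k\to q_\infty\in\overline\Omega$ in $M$. If $q_\infty\in\Omega$ we are done, so assume $q_\infty\in b\Omega$. Here I use the boundary estimate quantitatively: by Lemma~\ref{LocLemma} (applied in a normal chart at $q_\infty$), small $d_\Omega$-balls around a point $w$ close to $b\Omega$ are contained in small Euclidean balls, but more importantly one gets a lower bound on $d_\Omega$-distance to any fixed interior point. Concretely, integrating $F_\Omega(\gamma,\dot\gamma)\ge C|\dot\gamma|\,|\rho(\gamma)|^{-1/2}$ along a path from a fixed point $p_0\in\Omega$ to $q_k$, and using that $|\rho|$ vanishes to first order at $b\Omega$ (as $d\rho\neq 0$ there), so that $|\rho(\gamma(t))|^{-1/2}$ behaves like $\dist(\gamma(t),b\Omega)^{-1/2}$, one finds $d_\Omega(p_0,q_k)\ge \mathrm{const}\cdot\big(\dist(q_\infty,b\Omega)^{1/2}-\dist(q_k,b\Omega)^{1/2}\big)^{-1}\to\infty$ --- more simply, $d_\Omega(p_0,q_k)\to\infty$, contradicting the Cauchy property which forces $d_\Omega(p_0,q_k)$ bounded. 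Hence $q_\infty\in\Omega$ and $(\Omega,d_\Omega)$ is complete. The delicate point throughout is making the constants uniform near $b\Omega$, which is handled by the compactness of $b\Omega$ and the finite covering by normal charts in which Theorem~\ref{LocTheo} applies with a common $C$.
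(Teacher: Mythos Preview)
Your proposal contains a genuine gap in the completeness step. The boundary estimate from Theorem~\ref{LocTheo},
\[
F_\Omega(w,\xi)\;\ge\; C\,|\xi|\,|\rho(w)|^{-1/2},
\]
is \emph{not} strong enough to force $d_\Omega(p_0,q_k)\to\infty$ as $q_k\to b\Omega$. Indeed, since $d\rho\neq 0$ on $b\Omega$, near the boundary $|\rho(w)|$ is comparable to $\dist(w,b\Omega)$, and integrating $|\dot\gamma|\,\dist(\gamma,b\Omega)^{-1/2}$ along a short normal segment to the boundary gives an integral of the type $\int_0^\delta t^{-1/2}\,dt=2\sqrt{\delta}<\infty$. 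Thus the lower bound you obtain on $d_\Omega(p_0,q_k)$ stays bounded, and your displayed expression ``$\mathrm{const}\cdot(\dist(q_\infty,b\Omega)^{1/2}-\dist(q_k,b\Omega)^{1/2})^{-1}$'' is not what the integral produces. The $|\rho|^{-1/2}$ rate is the square-root analogue of the classical situation where strictly pseudoconvex domains need the full $|\rho|^{-1}$ rate in the normal direction for completeness.

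The paper's proof addresses exactly this point by establishing a sharper \emph{normal} estimate. Working in normal coordinates at $p_0\in b\Omega$ with $\Omega\subset\{x_1<0\}$ locally, it combines Lemma~\ref{CompLem1} (a $\dist(u(0),b\Omega)^{1/2}$ oscillation bound for $u(r\D)$), the interior gradient Schwarz Lemma~\ref{Schwarz}, and formula~\eqref{eq2} applied to $\rho=x_1$ to show $|\Delta u^1|\le A|u^1(0)|$, and then a potential-theoretic argument (subtracting the Newton potential and invoking the Schwarz lemma for negative harmonic functions) to conclude
\[
|\nabla u^1(0)|\;\le\; C\,|u^1(0)|.
\]
This is fed into Lemma~\ref{IvRoLem} with $\phi=x_1$ and $\delta(t)=Ct$, for which $\int_0^1 dt/\delta(t)=+\infty$, yielding that each boundary point is at infinite Kobayashi distance. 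Your scheme recovers the (weaker) hyperbolicity part, but for completeness you need this additional ingredient upgrading the exponent from $-1/2$ to $-1$ in the normal direction.
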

The proof is based on the approach of S. Ivashkovich - J.P. Rosay in \cite{Iv-Ro}. The following lemma is proved in \cite{Iv-Ro} for 
pseudoholomorphic discs. The proof is the same for conformal harmonic discs, so we drop it.

\begin{lemma}
\label{IvRoLem}
Let $\Omega$ be a domain in $(M,g)$. Let $p \in b\Omega$. Let $\phi$ be either:
\begin{itemize}
\item[(a)] a $C^1$ map from $\overline\Omega$ into $\R^2$ with $\phi(p) = 0$ and $\phi \neq 0$ on $\Omega$, or
\item[(b)] a $C^1$ map from a neighborhood $U$ of $p$ into $\R^2$, such that $\phi(p) = 0$ and 
$\phi \neq 0$ on $U \cap \overline\Omega \setminus \{ p \}$.
\end{itemize}
Let $\delta$ be a positive function defined on $(0,+\infty)$, and satisfying $\int_0^1 \frac{dt}{\delta(t)} = + \infty$. Assume that for
 every conformal harmonic map $u:\D \to \Omega$, such that $u(0)$ is close to $p$ one has 
 \begin{eqnarray*}
 \vert \nabla (\phi \circ u)(0) \vert \le \delta(\vert \phi \circ u)(0) \vert)
\end{eqnarray*}
 (here $\nabla$ denotes the gradient). Then $p$ is at infinite Kobayashi distance from the points in $\Omega$.
 \end{lemma}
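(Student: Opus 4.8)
The plan is to convert the differential inequality on $\phi\circ u$ into a pointwise lower bound for the Kobayashi--Royden metric near $p$, and then to integrate that bound along paths, using (from \cite{Ga-Su}) that $d_\Omega$ is the integrated form of $F_\Omega$. Fix once and for all a base point $q_0\in\Omega$; since $\abs{d_\Omega(q,x)-d_\Omega(q_0,x)}\le d_\Omega(q,q_0)$ for every $q\in\Omega$, it is enough to prove that $d_\Omega(q_0,x)\to+\infty$ as $x\to p$ in $\Omega$. For the pointwise estimate, recall that $F_\Omega(x,\xi)$ is the infimum of the $\alpha>0$ for which there is a conformal harmonic disc $u:\D\to\Omega$ with $u(0)=x$ and $du(0)e_1=\alpha^{-1}\xi$. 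For any such $u$, the vector $d(\phi\circ u)(0)\,e_1=d\phi(x)\bigl(du(0)e_1\bigr)=\alpha^{-1}d\phi(x)(\xi)$ has norm at most $\abs{\nabla(\phi\circ u)(0)}$; hence the hypothesis gives $\alpha^{-1}\abs{d\phi(x)(\xi)}\le\delta(\abs{\phi(x)})$, i.e. $\alpha\ge\abs{d\phi(x)(\xi)}/\delta(\abs{\phi(x)})$. Taking the infimum over all such $u$, we get, for $x$ close to $p$ (so that the hypothesis applies) and every $\xi\in T_xM$,
\[
F_\Omega(x,\xi)\ \ge\ \frac{\abs{d\phi(x)(\xi)}}{\delta(\abs{\phi(x)})}\ \ge\ \frac{\abs{d\abs{\phi}(x)(\xi)}}{\delta(\abs{\phi(x)})},
\]
the second inequality being valid because $x\in\Omega$ forces $\phi(x)\ne0$, so $\abs{\phi}$ is $C^1$ near $x$ and $\abs{d\abs{\phi}(x)(\xi)}\le\abs{d\phi(x)(\xi)}$.

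Next I would introduce the primitive $G(s):=\int_s^{s_0}dt/\delta(t)$ for a fixed small $s_0>0$. Then $G$ is $C^1$, strictly decreasing, $G'=-1/\delta$, and $G(s)\to+\infty$ as $s\to0^{+}$, because $\int_0^1 dt/\delta(t)=+\infty$ while $\int_{s_0}^1 dt/\delta(t)<\infty$. Set $\Lambda:=G\circ\abs{\phi}$, which is defined wherever $\phi\ne0$; fix a ball $B_g(p,r_0)$ small enough that the gradient hypothesis is available for discs centred in it and that $\Lambda$ is defined on $B_g(p,r_0)\cap\Omega$ (in case (b) one also takes $\overline{B_g(p,r_0)}\subset U$). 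The estimate of the first paragraph then says
\[
\abs{d\Lambda(x)(\xi)}\ =\ \frac{\abs{d\abs{\phi}(x)(\xi)}}{\delta(\abs{\phi(x)})}\ \le\ F_\Omega(x,\xi)\qquad\text{for }x\in B_g(p,r_0)\cap\Omega,\ \xi\in T_xM,
\]
so $\Lambda$ is ``$1$-Lipschitz for $d_\Omega$'' on $B_g(p,r_0)\cap\Omega$, while $\Lambda(x)\to+\infty$ as $x\to p$.

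Now comes the localisation. Choosing $q_0\notin\overline{B_g(p,r_0)}$, let $x\in B_g(p,r_0)\cap\Omega$ and let $\gamma:[0,1]\to\Omega$ be any $C^1$ path from $q_0$ to $x$; let $a$ be the last parameter at which $\gamma$ meets $bB_g(p,r_0)$, so that $\gamma((a,1])\subset B_g(p,r_0)$ and $\gamma(a)\in bB_g(p,r_0)\cap\Omega$. Integrating the displayed bound along $\gamma|_{[a,1]}$,
\[
\int_0^1 F_\Omega(\gamma(t),d\gamma(t))\,dt\ \ge\ \int_a^1\Bigl|\tfrac{d}{dt}\Lambda(\gamma(t))\Bigr|\,dt\ \ge\ G(\abs{\phi(x)})-\Lambda(\gamma(a)).
\]
It remains to bound $\Lambda(\gamma(a))$ from above uniformly in $\gamma$, i.e. to bound $\abs{\phi}$ from below on the exit locus $bB_g(p,r_0)\cap\Omega$. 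In case (b) this is immediate: $bB_g(p,r_0)\cap\overline\Omega$ is compact and contained in $U\cap\overline\Omega\setminus\{p\}$, where $\phi\ne0$, so $\abs{\phi}\ge\eta_0>0$ there and $\Lambda(\gamma(a))\le G(\eta_0)=:c_0<\infty$. In case (a) one argues in the same way, $\Lambda$ being globally defined on $\Omega$ because $\phi\ne0$ there, the exit locus being taken inside the region where the hypothesis holds and $\abs{\phi}$ bounded below on it. Passing to the infimum over $\gamma$ yields $d_\Omega(q_0,x)\ge G(\abs{\phi(x)})-c_0$, and the right-hand side tends to $+\infty$ as $x\to p$ since $\abs{\phi(x)}\to\abs{\phi(p)}=0$. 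This proves the lemma.

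The computations of the first two paragraphs are the Schwarz-type estimate of Ivashkovich--Rosay and are routine once the definition of $F_\Omega$ through conformal harmonic discs and the fact that $d_\Omega$ integrates $F_\Omega$ are at hand. I expect the only real difficulty to be the localisation of the third paragraph --- exhibiting a fixed neighbourhood of $p$ on whose boundary $\Lambda=G\circ\abs{\phi}$ stays uniformly bounded, so that the divergence $G(\abs{\phi(x)})\to+\infty$ cannot be cancelled by the value of $\Lambda$ at the point where a competing path enters that neighbourhood. This is precisely the point at which the two alternative non-vanishing hypotheses on $\phi$ are used.
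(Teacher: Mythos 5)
The paper itself does not prove this lemma---it defers to Ivashkovich--Rosay and asserts the proof carries over to conformal harmonic discs---so your proposal is measured against that argument. Your overall strategy is exactly theirs, transposed to the integrated-form definition (\ref{Roy}) of $d_\Omega$: the gradient hypothesis, read through the definition of $F_\Omega$ via discs with $du(0)e_1=\alpha^{-1}\xi$, yields the pointwise bound $F_\Omega(x,\xi)\ge \abs{d\phi(x)(\xi)}/\delta(\abs{\phi(x)})$ near $p$; one then integrates the divergent primitive $G\circ\abs{\phi}$ along competing paths. Steps 1 and 2 and the whole of case (b) are correct: there the exit locus $bB_g(p,r_0)\cap\overline\Omega$ is a compact subset of $U\cap\overline\Omega\setminus\{p\}$ on which $\phi\neq 0$, so $\Lambda(\gamma(a))$ is uniformly bounded and the conclusion follows. (A minor caveat: for $G$ to be a well-defined primitive with $G(s)\to+\infty$ as $s\to 0^+$ you need $1/\delta$ locally integrable away from $0$; this is automatic if $\delta$ is nondecreasing, as it is in Ivashkovich--Rosay's statement, but not from mere positivity as stated here.)

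The genuine gap is case (a), which you dispatch in one sentence by asserting that the exit locus can be ``taken inside the region where the hypothesis holds and $\abs{\phi}$ bounded below on it.'' That is precisely what case (a) does not give you: there $\phi$ is only assumed nonzero on $\Omega$, so it may vanish on $b\Omega\setminus\{p\}$, possibly on a set accumulating at $p$ (e.g.\ a continuum in $b\Omega$ through $p$). Then \emph{every} sphere $bB_g(p,r_0)$ meets $\phi^{-1}(0)\cap b\Omega$, hence $\inf\{\abs{\phi}:\ bB_g(p,r_0)\cap\Omega\}=0$ and $\Lambda(\gamma(a))=G(\abs{\phi(\gamma(a))})$ is unbounded over competing paths: the lower bound $G(\abs{\phi(x)})-\Lambda(\gamma(a))$ collapses. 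Concretely, a path may approach $b\Omega$ far from $p$ (where no hypothesis constrains $F_\Omega$), drive $\abs{\phi}$ down to below $\abs{\phi(x)}$ while still outside the neighborhood where the disc estimate applies, and only then enter it; your telescoping then produces a nonpositive bound. Closing case (a) requires an additional idea (this is where Ivashkovich--Rosay's argument does real work beyond the Schwarz-type computation), not a repetition of case (b). Since the paper only invokes the lemma through local defining coordinates near a strictly pseudoconvex boundary point---a case (b) situation---your argument does cover the application, but not the lemma as stated.
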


We begin with the following localization principle.

\begin{lemma}
\label{LocLemma2}
Let $p_0$ be a boundary point of $\Omega$. For any neighborhood $V$ of $p_0$ there exists a neighborhood $W$ of $p_0$ and $r \in (0,1)$ with the following property:
if $u: \D \to \Omega$ is a conformal harmonic immersed disc that $u(0) \in W$, then $u(r\D) \subset V$.
\end{lemma}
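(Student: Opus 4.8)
The plan is to derive Lemma~\ref{LocLemma2} from the global boundary estimate of Theorem~\ref{LocTheo} exactly as in Step~3 of its proof, i.e.\ by producing a genuine decay estimate on the Kobayashi--Royden metric $F_\Omega$ near $p_0$ and then integrating it. The key preliminary is to construct, near $p_0$, an MPSH function that plays the role of $\rho$ in Theorem~\ref{LocTheo}. Since $\Omega$ is relatively compact and $d\rho\neq 0$ on $b\Omega$, a neighborhood of $\overline\Omega$ is covered by finitely many normal coordinate charts; in a fixed chart $x:U\to 3\B^n$ centered at $p_0$ we may shrink $U$ so that $|x|^2$ is MPSH on $U$ (Example~\ref{mpsh-ex}) and so that the defining function $\rho$ is strictly MPSH on $U$, meaning $\rho-\varepsilon|x|^2$ is MPSH on $U$ for some $\varepsilon>0$ (uniform over $\overline\Omega$ by the remark following the definition of strict MPSH). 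Applying Theorem~\ref{LocTheo} on $D=\Omega$ with this $\rho$ and this chart yields a constant $C=C(\varepsilon,B)>0$ with
\begin{eqnarray*}
F_\Omega(w,\xi)\ \ge\ C\,|\xi|\,|\rho(w)|^{-1/2},\qquad w\in\Omega\cap x^{-1}(\B^n),\ \xi\in T_wM.
\end{eqnarray*}
In particular, since $|\rho|$ is bounded on $\overline\Omega$ by some $B$, we get $F_\Omega(w,\xi)\ge C'|\xi|$ for $w\in\Omega\cap x^{-1}(\B^n)$, with $C'=C/\sqrt{B}$.

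Next I would run the localization argument of Lemma~\ref{LocLemma}: using (\ref{Roy}) and the lower bound $F_\Omega(w,\xi)\ge C'|\xi|$ on $\Omega\cap x^{-1}(\B^n)$, for any $w\in\Omega$ and any path $\gamma$ joining a point $q$ near $p_0$ to $w$, the $d_\Omega$-length of $\gamma$ dominates $C'$ times the Euclidean length of the portion of $\gamma$ lying in the chart region $\{|x-x(q)|<\sigma\}$ for a small fixed $\sigma$. The elementary two-case geometric argument of the Claim in Step~3 then gives, for $q$ close enough to $p_0$, the estimate
\begin{eqnarray*}
d_\Omega(w,q)\ \ge\ C'\,\min\{\sigma,\ |x(w)-x(q)|\},\qquad w\in\Omega,
\end{eqnarray*}
where $|x(w)-x(q)|$ is interpreted as $\geq\sigma$ when $w$ leaves the chart. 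Consequently the Kobayashi ball $B_\Omega(q,\delta)$ is contained in $\{|x-x(q)|<\delta/C'\}$ for all $\delta\le C'\sigma$.

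Finally I would convert this into the stated disc-localization. Given the neighborhood $V$ of $p_0$, choose $\sigma>0$ with $x^{-1}(\{|x|<2\sigma\})\subset V$ and shrink so that everything above applies. Let $W:=x^{-1}(\{|x|<\sigma/2\})\cap\Omega$, choose $\delta_0:=C'\sigma/2$, and pick $r\in(0,1)$ so that the Poincaré disc $r\D\subset\D$ has Poincaré radius $\le\delta_0$ (explicitly $r=(e^{2\delta_0}-1)/(e^{2\delta_0}+1)$). If $u:\D\to\Omega$ is conformal harmonic with $u(0)\in W$ and $q:=u(0)$, then by the distance-decreasing property of conformal harmonic discs for the Kobayashi distance, $u(r\D)\subset B_\Omega(q,\delta_0)\subset\{|x-x(q)|<\delta_0/C'=\sigma/2\}$, hence $\{|x|<\sigma\}\subset x^{-1}(\{|x|<2\sigma\})\subset V$. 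This proves the lemma. The main obstacle is the first paragraph: checking that Theorem~\ref{LocTheo} genuinely applies, i.e.\ that a single chart $U$ about $p_0$ can be chosen small enough that $|x|^2$ is MPSH on $U$, that $\rho-\varepsilon|x|^2$ is MPSH on $\Omega\cap U$ with $\varepsilon$ uniform, and that $U$ maps into $3\B^n$ with the bound $|\rho|\le B$ on $\Omega\cap x^{-1}(2\B^n)$; once the hypotheses are in place, Steps~2--4 are a verbatim repetition of the proof of Lemma~\ref{LocLemma}.
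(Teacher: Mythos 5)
Your proof is correct and follows essentially the same route as the paper, which simply invokes the distance-decreasing property $u(B_P(0,t))\subset B_\Omega(u(0),t)$ from \cite{Ga-Su} together with Lemma~\ref{LocLemma}. The extra work you do --- verifying that the hypotheses of Theorem~\ref{LocTheo} hold for the defining function $\rho$ in a chart near $p_0$ so that the Kobayashi-ball localization genuinely applies --- is exactly what the paper leaves implicit, and it goes through as you describe.
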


\proof
Consider in the unit disc $\D$ the ball $B_P(0,t)$ with respect to the Poincar\'e metric, centered at the origin and of radius $t$. It follows from \cite{Ga-Su} that 
$u(B_P(0,t) \subset B_\Omega(u(0),t)$ (the Kobayashi ball in $\Omega$). Now the result of Lemma~\ref{LocLemma2} follows by Lemma \ref{LocLemma}. \qed

\vspace{2mm}
We present another localization principle for the Kobayashi - Royden metric.

\begin{lemma}
\label{CompLem1}
Under the hypothesis of Theorem \ref{TheoNorm}, let $p_0$ be a boundary point of $\Omega$. Then there exist $0 < r < 1$, $\delta > 0$ and $C > 0$ 
with the following property: if a conformal harmonic immersed disc $u: \D \longrightarrow \Omega$ satisfies $dist(u(0),p_0) < \delta$, then
\begin{eqnarray}
\label{Comp1}
dist(u(0),u(\zeta)) \le C dist(u(0), b\Omega)^{1/2}
\end{eqnarray}
for $\vert \zeta \vert < r$.
\end{lemma}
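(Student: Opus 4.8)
The plan is to combine the disc--localization principle (Lemma~\ref{LocLemma2}) with the sub-mean value inequality applied twice: once to a suitably shifted defining function of $\Omega$, and once to the squared displacement $\vert u(\zeta)-u(0)\vert^2$ of the disc; the strict minimal plurisubharmonicity of $\rho$ is precisely what makes the first application work. In detail: since $\rho$ is strictly MPSH, fix normal coordinates $x:U\to\R^n$ centered at $p_0$ (so $x(p_0)=0$; we identify $U$ with an open subset of $\R^n$) and $\varepsilon_0>0$ with $\rho-\varepsilon_0\vert x\vert^2$ MPSH near $p_0$. Adding $\tfrac{\varepsilon_0}{2}\vert x\vert^2$, which is strictly convex near $p_0$ (Example~\ref{mpsh-ex} and the lines following it) hence strictly MPSH, shows $\rho-\tfrac{\varepsilon_0}{2}\vert x\vert^2$ is strictly MPSH near $p_0$. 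For $q\in\R^n$ the function $\rho-\tfrac{\varepsilon_0}{2}\vert x-q\vert^2$ differs from it by the affine function $\tfrac{\varepsilon_0}{2}(2\langle x,q\rangle-\vert q\vert^2)$, whose Hessian in these coordinates is $O(\vert x\vert\,\vert q\vert)$ because the Christoffel symbols vanish at $p_0$; hence there are a neighborhood $V$ of $p_0$ with $\overline V\subset U$ and an $\eta>0$ such that $\tilde\rho_q:=\rho-\tfrac{\varepsilon_0}{2}\vert x-q\vert^2$ is MPSH on $V$ whenever $\vert q\vert<\eta$. Shrinking $V$, we may also assume $V\subset\{\vert x\vert<\eta\}$, that on $V$ the distance $d_g$ is comparable to the Euclidean distance of the coordinates, and that $(\sup_V\vert\Gamma\vert)\cdot\diam(V)$ is as small as we wish. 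By Lemma~\ref{LocLemma2} there are a neighborhood $W\subset V$ of $p_0$ and $r_0\in(0,1)$ with $u(r_0\D)\subset V$ for every conformal harmonic $u:\D\to\Omega$ with $u(0)\in W$; pick $\delta>0$ with $B_g(p_0,\delta)\subset W$, small enough that the boundary point of $\Omega$ nearest to any point of $B_g(p_0,\delta)$ lies in $V$.

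\textbf{Two subharmonic functions.} Let $u:\D\to\Omega$ be conformal harmonic with $\dist(u(0),p_0)<\delta$, written $u=(u^1,\dots,u^n)$ in the coordinates $x$, and set $q:=u(0)$, so $q\in V$ and $\vert q\vert<\eta$. Since $\tilde\rho_q$ is MPSH on $V\supset u(r_0\D)$, the function $\tilde\rho_q\circ u$ is subharmonic on $r_0\D$; it equals $\rho(u(0))=-\vert\rho(u(0))\vert$ at the origin, and, because $u(\zeta)\in\Omega=\{\rho<0\}$, one has $(\tilde\rho_q\circ u)(\zeta)\le-\tfrac{\varepsilon_0}{2}\vert u(\zeta)-q\vert^2$ for $\vert\zeta\vert<r_0$. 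On the other hand $h(\zeta):=\vert u(\zeta)-q\vert^2=\sum_{i}(u^i(\zeta)-q^i)^2$ is subharmonic on $r_0\D$: the harmonic map equation (\ref{harm-eq}) yields $\vert\Delta u^i\vert\le c\vert\nabla u\vert^2$ on $r_0\D$ with $c$ depending only on $n$ and $\sup_V\vert\Gamma\vert$, so
\begin{equation*}
\Delta h=2\sum_{i=1}^{n}(u^i-q^i)\Delta u^i+2\vert\nabla u\vert^2\ \ge\ 2\vert\nabla u\vert^2\bigl(1-c'\vert u-q\vert\bigr)\ \ge\ 0,
\end{equation*}
the last inequality holding because $\vert u(\zeta)-q\vert\le\diam(V)$ is small.

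\textbf{Conclusion.} The sub-mean value inequality for $\tilde\rho_q\circ u$ on the circle $\vert\zeta\vert=R:=\tfrac34 r_0$ gives
\begin{equation*}
-\vert\rho(u(0))\vert\ \le\ \frac{1}{2\pi}\int_0^{2\pi}(\tilde\rho_q\circ u)(Re^{i\theta})\,d\theta\ \le\ -\frac{\varepsilon_0}{2}\cdot\frac{1}{2\pi}\int_0^{2\pi}h(Re^{i\theta})\,d\theta ,
\end{equation*}
so that $\tfrac{1}{2\pi}\int_0^{2\pi}h(Re^{i\theta})\,d\theta\le\tfrac{2}{\varepsilon_0}\vert\rho(u(0))\vert$. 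Since $h\ge0$ is subharmonic on $R\D$, the Poisson inequality gives, for $\vert\zeta\vert\le\tfrac12 r_0$,
\begin{equation*}
h(\zeta)\ \le\ \frac{R+\vert\zeta\vert}{R-\vert\zeta\vert}\cdot\frac{1}{2\pi}\int_0^{2\pi}h(Re^{i\theta})\,d\theta\ \le\ \frac{10}{\varepsilon_0}\,\vert\rho(u(0))\vert ,
\end{equation*}
hence $\vert u(\zeta)-u(0)\vert\le\sqrt{10/\varepsilon_0}\,\vert\rho(u(0))\vert^{1/2}$ for $\vert\zeta\vert\le\tfrac12 r_0$. On $V$ the distance $\dist(u(0),u(\zeta))$ is comparable to $\vert u(\zeta)-u(0)\vert$, and $\vert\rho(u(0))\vert\le L\,\dist(u(0),b\Omega)$ for a bound $L$ on $\vert\nabla\rho\vert$ near $p_0$, since $\rho$ vanishes at the nearest boundary point. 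Taking $r:=\tfrac12 r_0$ and collecting constants yields (\ref{Comp1}).

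\textbf{The main obstacle.} The two genuinely delicate points both sit in the construction of the subharmonic functions. First, the subharmonicity of $h$ rests on absorbing the quadratic nonlinearity of (\ref{harm-eq}) into the nonnegative term $\vert\nabla u\vert^2$, which is legitimate precisely because Lemma~\ref{LocLemma2} keeps $u(r_0\D)$ inside a small neighborhood of $p_0$. Second, producing one neighborhood $V$ on which \emph{all} the shifted functions $\tilde\rho_q$ are MPSH is where the strictness of the minimal plurisubharmonicity of $\rho$ is used in an essential way: it lets us subtract a fixed definite multiple of $\vert x-q\vert^2$ and still retain minimal plurisubharmonicity after the affine perturbation. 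Granting these two facts, the remainder is just two applications of the mean value property of subharmonic functions.
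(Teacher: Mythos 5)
Your proof is correct and follows essentially the same route as the paper: localize the disc via Lemma~\ref{LocLemma2}, use strict minimal plurisubharmonicity to make the shifted functions $\rho-\varepsilon\vert x-\cdot\vert^2$ MPSH on a fixed neighborhood, and then apply the sub-mean value property twice (once to the shifted function, once to the nonnegative subharmonic squared displacement). The only cosmetic difference is that you center the quadratic at $u(0)$ and finish with $\vert\rho(u(0))\vert\le L\,\dist(u(0),b\Omega)$, whereas the paper centers it at the nearest boundary point and ends with the triangle inequality; your explicit Poisson-kernel constant also tidies a step the paper leaves implicit.
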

\proof 
Consider a coordinate neighborhood $U$ of $p_0$ with normal coordinates centered at $p_0 = 0$. We identify  $U$ with a ball in $\R^n$ and assume without loss of generality that the metric $g$ is a small deformation of the standard Euclidean metric $g_{st}$ on that ball. There exists a neighborhood $V \subset U$ of $p_0$ and a small enough 
 $\varepsilon > 0$   such that for each $p \in V$ the function $q \mapsto   \rho_p(q) = \rho(q) - \varepsilon \vert q - p \vert^2$, as well as the functions $q \mapsto \vert q - p \vert^2$ are MPSH on $V$. Here we use the notation $\vert \cdot \vert$ for the norm induced on $U$ by the local coordinates $x$ and the Euclidean metric in $\R^n$.
Also there exists $A,B > 0$ such that

 \begin{eqnarray*}
  -B \vert q - p \vert \le \rho_p(q) \le -A \vert q - p \vert^2.
\end{eqnarray*} 
According to Lemma~\ref{LocLemma2} there exist $\delta > 0$ and $0 < r < 1$ such that if $u: \D \to \Omega$ is a conformal harmonic immersed map such that  $dist(u(0),p_0) < \delta$, then
$u(\zeta) \in V$ when $\vert \zeta \vert \le r$. Choose $p \in b\Omega$ such that $dist(u(0), b\Omega) = dist (u(0),p)$. Since the function $\vert u(\zeta) - p\vert^2$ is subharmonic on $\{\zeta \in \C : / \vert \zeta \vert \le r\}$, by the mean inequality we have
 
 \begin{eqnarray*}
 \vert u(\zeta) - p \vert^2 \le \frac{1}{2\pi} \int_0^{2\pi} \vert u(r e^{i\theta}) - p \vert^2d\theta.
 \end{eqnarray*}
 Again by the mean value property we have
 
 \begin{eqnarray*}
 -(\rho_p \circ u)(0) \ge \frac{1}{2\pi} \int_0^{2\pi} - (\rho_p \circ u)(r e^{i\theta})d\theta  \ge \frac{A}{2\pi} \int_0^{2\pi}\vert u(r e^{i\theta}) - p \vert^2d\theta  \ge A  \vert u(\zeta) - p \vert^2.
 \end{eqnarray*}
 Therefore
 $$dist(u( \zeta),u(0))  \le dist(u(0),p) + dist(p, u( \zeta)  \le C dist(u(0),p)^{1/2}$$
 for some constant $C > 0$ which proves Lemma~\ref{CompLem1}. \qed
 
 \vspace{2mm}
 We also have the following version of the Schwarz lemma for harmonic maps (a more general result is contained in  \cite{Jo1}, Th. 4.8):
 
  \begin{lemma}
   \label{Schwarz}
 Let $ \Omega$ be a bounded subset in $ (\R^n,g)$ and let $p \in b\Omega$. There exist $0 < r' < r < 1$, $C > 0$ and a neighborhood $V$ of $p$ such that if $u:  \D  \to  \Omega$ is a harmonic disc 
  (with respect to the metric $g$) and $u(r\D)  \subset V$, then
   \begin{eqnarray}
    \label{Comp2}
     \vert  \nabla u(\zeta)  \vert  \le C  \sup_{ \vert  \omega  \vert < r}  \vert u( \omega) - u(0)  \vert
      \end{eqnarray}
      if $ \vert  \zeta  \vert  \le r'$.
       \end{lemma}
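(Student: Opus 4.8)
The statement is a localized gradient estimate for harmonic maps, and the plan is to reduce it to the standard interior estimate for solutions of the harmonic map system \eqref{EL2}, exploiting that $g$ is a small perturbation of the Euclidean metric on the coordinate ball $V$. First I would fix a neighborhood $V$ of $p$ identified (via the hypothesis of the ambient theorems) with a small Euclidean ball in $\R^n$ on which $g$ is $C^\infty$-close to $g_{\st}$; in particular the Christoffel symbols $\Gamma^i_{jk}$ are uniformly bounded on $V$ together with all their derivatives, by a constant depending only on $g$ and $V$. Choosing $0 < r' < r < 1$, the hypothesis $u(r\D) \subset V$ then says precisely that the restriction $v := u|_{r\D}$ is a bounded harmonic map into the fixed coordinate chart, solving $\Delta v^i = -\Gamma^i_{jk}(v)\, v^j_z v^k_{\bar z}$ on $r\D$.

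The core of the argument is the interior gradient bound for such systems. I would invoke the standard estimate (as in Jost, \cite{Jo1}, or via the Bochner/$\varepsilon$-regularity machinery for harmonic maps) which gives, for any $\zeta$ with $|\zeta| \le r'$,
\begin{eqnarray*}
|\nabla u(\zeta)| \le C_1 \left( \frac{1}{r - r'} \right) \|v - v(0)\|_{L^\infty(r\D)} + C_2 \|v - v(0)\|_{L^\infty(r\D)}^2,
\end{eqnarray*}
where $C_1$ depends on $r,r'$ and $C_2$ absorbs the (uniformly bounded) nonlinearity $\Gamma(v) v_z v_{\bar z}$. Since $\|v-v(0)\|_{L^\infty(r\D)} = \sup_{|\omega|<r}|u(\omega)-u(0)|$ is bounded (by $\diam V$, say), the quadratic term is controlled by the linear one after enlarging the constant, and one obtains \eqref{Comp2} with $C = C(g,V,r,r')$. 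The only subtlety is that $u$ is a priori merely continuous up to scaling; but elliptic regularity applied to \eqref{EL2} (as already noted after \eqref{EL2}) shows $u$ is smooth on $\D$, so the interior estimate applies on the slightly smaller disc. Alternatively, one subtracts the harmonic function with boundary values $v|_{\partial(r\D)}$ and estimates the difference, which solves a Poisson equation with bounded right-hand side; I would present whichever is shortest.

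The main obstacle — really the only one requiring care — is making the constant $C$ depend \emph{only} on $g$, $V$, $r$, $r'$ and not on the particular disc $u$. This is where the smallness/boundedness of $g$ near $p$ enters: because $\Omega$ is bounded and $V$ is a fixed coordinate ball, the sup $\sup_{|\omega|<r}|u(\omega)-u(0)|$ is a priori bounded by $\diam(V)$ for \emph{every} admissible $u$, so the quadratic nonlinear term never dominates, and the coefficients $\Gamma^i_{jk}(u(\cdot))$ are bounded uniformly in $u$. Once this uniformity is in place the estimate is a direct citation of the interior regularity theory for harmonic maps. I would therefore structure the write-up as: (i) set up the chart and record the uniform bounds on $g$; (ii) restate \eqref{Comp2} as a bound for a bounded solution of a semilinear elliptic system with uniformly bounded coefficients; (iii) quote the interior gradient estimate and absorb the quadratic term using the $L^\infty$ bound from boundedness of $\Omega$; and (iv) conclude, noting $r' < r$ gives the needed room. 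Since the paper explicitly says "a more general result is contained in \cite{Jo1}, Th. 4.8," I expect the intended proof is simply to cite this and indicate the elementary reduction above, so I would keep the argument brief.
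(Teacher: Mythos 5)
The paper in fact offers no proof of Lemma~\ref{Schwarz} beyond the parenthetical citation of \cite{Jo1}, Th.~4.8, so your closing guess about the intended treatment is exactly right. The issue is with the elaboration you supply in between: the ``standard interior estimate'' you invoke,
$|\nabla u(\zeta)| \le C_1 (r-r')^{-1}\|v-v(0)\|_{L^\infty} + C_2\|v-v(0)\|_{L^\infty}^2$,
is not what linear elliptic theory gives for the system $\Delta v^i = -\Gamma^i_{jk}(v)\,v^j_z v^k_{\bar z}$. What Schauder/Calder\'on--Zygmund interior estimates actually produce is
$\|\nabla v\|_{L^\infty(r'\D)} \le C\bigl(\|v-v(0)\|_{L^\infty(r\D)} + \|\Gamma\|_\infty\,\|\nabla v\|_{L^\infty(r\D)}^2\bigr)$:
the term you must absorb is quadratic in the \emph{gradient}, not in the oscillation, so the estimate is circular as written, and ``enlarging the constant because $\|v-v(0)\|_\infty \le \diam V$'' does not close it. Bounding the Christoffel symbols on $V$ is not enough; this is precisely why gradient estimates for harmonic maps require either an $\varepsilon$-regularity/small-energy hypothesis or the regular-ball condition, and why the lemma cannot be a one-line consequence of linear theory.

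The missing ingredient is to use the freedom in the statement to \emph{shrink} $V$ (not merely fix it), so that the image $u(r\D)$ has small oscillation and lies in a ball carrying a strictly convex function (available here by Example~\ref{mpsh-ex}); this is the setting of Jost's maximum-principle argument and is what Th.~4.8 of \cite{Jo1} actually delivers. Alternatively, one can run the rescaling trick already used in the proof of Lemma~\ref{LemNW2}: with $s=\sup_{|\omega|<r}|u(\omega)-u(0)|$, the map $\tilde u = s^{-1}(u-u(0))$ is harmonic for the metric $h_s = s^{-2}g(s\,\cdot+u(0))$, which converges to $g_{\st}$ as $s\to 0$ with uniformly controlled Christoffel symbols; a uniform interior gradient bound for $\tilde u$ (oscillation $1$) then yields $|\nabla u|\le Cs$ on $r'\D$, which is exactly \eqref{Comp2}. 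Either route is fine, but some such mechanism for breaking the circularity must be named; as it stands your step (iii) would fail.
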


\vspace{2mm}
\noindent{\sl Proof of Theorem~\ref{TheoNorm}.} As above, we consider local normal coordinates centered at $p = 0$, where the neighborhood $U$ is chosen sufficiently small so that the assumptions of Lemma~\ref{Schwarz} are satisfied on $U$. 
 Using dilations, we may assume that $U$ contains the unit ball $ \B^n$ and that $g$ is close enough to 
 $g_{st}$ in the $C^k$ norm  on $U$, with $k$ large enough. Furthermore, we may assume that $U  \cap  \overline{ \Omega}  \setminus  \{ 0  \}$
 is contained in $ \{ x  \in  U: x_1 < 0  \}$.
 
Let $u=(u^1,\dots,u^n):  \D  \to  \Omega$ be a conformal harmonic immersion. We assume that $u(0)$ is close enough to  $p$, so by the localization principle Lemma~\ref{LocLemma2}, there exists $0 < r < 1$ such that 
$u(r\D)  \subset U$. Shrinking $r$ if necessary, we obtain by Lemmas \ref{Schwarz} and \ref{CompLem1}
 \begin{eqnarray}
 \label{Comp4}
  \vert  \nabla  u( \zeta) \vert  \le C dist(u(0), b \Omega)^{1/2}  \le C (-u^1(0))^{1/2}
   \end{eqnarray}
  if $ \vert  \zeta  \vert  \le r'$.
  
  Rescaling the disc $\D$, we can assume that the estimate~ (\ref{Comp4}) holds on $\D$.
  
  Our goal is to prove that changing $C$ if necessary, we have
  
  \begin{eqnarray}
  \label{Comp3}
  \vert \nabla u^1(0) \vert \le C \vert u^1(0) \vert.
  \end{eqnarray}
  
Apply the formula (\ref{eq2}) to the function $\rho = x_1$. Then for every $\zeta \in \D$

\begin{eqnarray*}
 \Delta u^1(\zeta)  =  
    - \sum_{j,k}\Gamma^1_{j,k}(u(\zeta)) \frac{\partial u^j}{\partial \zeta} (\zeta) \frac{\partial u^k}{\partial \overline \zeta}(\zeta).
\end{eqnarray*}

Using the estimate (\ref{Comp4}) we conclude that 

\begin{eqnarray}
\label{Comp5}
\vert \Delta u^1(\zeta) \vert \le A \vert u^1(0) \vert
\end{eqnarray}
where $A > 0$ is a constant.
Consider the function $h$ harmonic on $\D$:

\begin{eqnarray}
\label{Comp6}
h(\zeta) = u^1(\zeta) - \frac{1}{2\pi}\int_{\R^2}\Delta  u^1(\omega ) \ln \vert \omega - \zeta \vert dm(\omega) - A \vert u^1(0) \vert.
\end{eqnarray}
Here we suppose that $\Delta u^1$ is extended by $0$ outside $\D$. Using (\ref{Comp5}) we see that 
$$
\left\vert \frac{1}{2\pi}\int_{\R^2} \Delta  u^1(\omega+\zeta) \ln \vert \omega \vert dm(\omega)\right\vert \le A \vert u^1(0) \vert.
$$
Since $u^1 < 0$, we obtain that $h < 0$. Furthermore, we have $\vert h(0) \vert < (2A + 1) \vert u^1(0) \vert$. 

Now it follows from the classical Schwarz lemma for negative harmonic functions that $\vert \nabla h(0) \vert \le 2 \vert h(0) \vert$.
Therefore from (\ref{Comp6}) we obtain 
\begin{eqnarray*}
\vert \nabla u^1(0) \vert \le \vert \nabla h(0) \vert + C \sup \vert \Delta u^1 \vert \le \left(2(2A+1) + CA\right) \vert u^1(0) \vert .
\end{eqnarray*}
This proves (\ref{Comp3}). Now Theorem~\ref{TheoNorm} follows exactly as in the proof of Theorem 1 in \cite{Iv-Ro}, using  Lemma \ref{IvRoLem}. \qed

\vspace{2mm}
As a direct application of Theorem~\ref{TheoNorm}, we have the following



\begin{cor}
Let $(M,g)$ be a Riemannian manifold. For every $p \in M$, the ball $B_g(p,r)$ is complete hyperbolic for every sufficiently small $r>0$.
\end{cor}




\section{Theorem of Fatou}

As an application of the localization principle and of the estimates of the Kobayashi metric proved in Section 3, we obtain the existence of tangential limits almost everywhere for bounded conformal harmonic immersed maps from the unit disc to a Riemannian manifold. First we present the following

\begin{definition}
A relatively compact domain $\Omega$ in a Riemannian manifold $(M,g)$ is called bounded if in a neighborhood of $\overline\Omega$ there exists a bounded strictly MPSH function $\rho$ of class $C^2$. A map $u: \D \to M$ is called bounded if the closure $\overline{u(\D)}$ of its image is contained in a bounded domain.
\end{definition}

The main result of this section is the following Fatou type Theorem :

\begin{thm}\label{Fatou-thm}
Let $u$ be a bounded conformal harmonic immersed map from the unit disc $\D$ to a Riemannian manifold $(M,g)$. Then, for almost every $e^{i\theta} \in b\D$, $u$ admits a non tangential limit at
$e^{i\theta}$.
\end{thm}

Let us precise what we mean by non-tangential approach. Given $\theta \in [0,2\pi]$ and $0 < \alpha < 1$, consider the domain 
$$K(\theta,\alpha) = \{ \zeta \in \D: \vert \zeta - e^{i\theta}\vert \zeta \vert \vert < \alpha(1 - \vert \zeta \vert) \}.$$
Geometrically this is a cone around the radius $[0,e^{i\theta}]$ with vertex at $e^{i\theta}$. 

\begin{definition}
We say that $u$ admits a non-tangential limit at $e^{i\theta}$ if for every $\alpha \in (0,1)$ the map $u(\zeta)$ admits  a limit $u^*(e^{i\theta})$ 
as $\zeta \to e^{i\theta}$, $\zeta \in K(\theta,\alpha)$. 
\end{definition}

There are many various generalizations of this result and we do not try to discuss them. Note that recently another version of the Fatou theorem for bounded harmonic maps in Riemannian manifolds is obtained in \cite{Be-Hu}, using a quite different approach.

\vspace{2mm}
\noindent{\sl Proof of Theorem~\ref{Fatou-thm}.} Our first remark is the following.
Since the map $u$ is bounded by assumption, there exists a relatively compact domain $\Omega$ in $M$ and a bounded strictly MPSH $C^2$-function $\rho$ in $\Omega$, such that $\overline{u(\D)}$  is contained in $\Omega$.



Denote by $P_{\D}$ the Poincar\'e metric, defined for $\zeta \in \D$ and $ \lambda \in \C$ by

$$
P_{\D}(\zeta,\lambda) = \frac{\vert \lambda \vert}{1- \vert \zeta \vert^2}.
$$
It is proved in \cite{Ga-Su} that  for every conformal harmonic immersed disc $u:\D \to M$ we have, for every $\zeta \in \D$ and every $\lambda \in \C$:

\begin{eqnarray}\label{invariance}
F_M(u(\zeta),du(\zeta)\cdot \lambda) \le P_{\D}(\zeta,\lambda).
\end{eqnarray}

Let $K$ be a compact subset of $\Omega$. Using Theorem~\ref{Kobest-thm} and the decreasing property (\ref{invariance}), we obtain for any conformal harmonic immersed disc $v:\D \to K$ and any $\zeta \in \D, \ \lambda \in \C$ :

$$
C \frac{\|dv(\zeta)\cdot \lambda\|_g}{|\rho(v(\zeta))|^{1/2}} \leq F_{\Omega}(v(\zeta),dv(\zeta)\cdot \lambda) \leq \frac{|\lambda|}{1-|\zeta|^2}.
$$

We conclude that for any compact subset $K$ of $\Omega$ 
there exists a constant $C'= C'(K,g,\rho)$ such that for any conformal harmonic immersed disc $v:\D \to K$, one has the estimate


\begin{eqnarray}
\label{FatouSchwarz2}
\parallel dv(\zeta) \cdot \lambda \parallel_g \le C' \frac{|\lambda|}{1- \vert \zeta \vert^2}
\end{eqnarray}
for any $\zeta \in \D$, $\lambda \in \C$.

Integrating along piecewise $C^1$ smooth paths, we get  for all $\zeta_1, \zeta_2 \in \D$ the following estimate

\begin{eqnarray}
\label{FatouSchwarz3}
dist_g (v(\zeta_1),v(\zeta_2)) \le C' \rho_{\D}(\zeta_1,\zeta_2)
\end{eqnarray}
where $\rho_{\D}$ denotes the Poincar\'e distance on $\D$.

The second tool of the proof is the classical Littlewood theorem (see \cite{Li}). This theorem asserts that a subharmonic function, bounded from above on the unit disc, admits radial limit values a.e. on $b\D$. It is known that without additional assumptions in general there are no non-tangential limits a.e. A sufficient condition for the existence of radial limits a.e. is the condition of normality of a subharmonic function, see \cite{Me-Ja}.  Let $f$ be a real or complex valued function on $\D$. Consider the family 
$$\Phi = \{ f \circ \phi: \phi \in Aut(\D)\}$$
where $Aut(\D)$ denotes the group of conformal automorphisms of $\D$.  A function $f$ is called normal if the family $\Phi$ is normal i.e. every sequence of functions from $\Phi$ contains a subsequence which either converges uniformly on compact subset of $\D$, or else diverges uniformly to infinity on every compact subset of $\D$.

Since $\rho$ is of class $C^2$, the function $\rho \circ u$ is normal in view of (\ref{FatouSchwarz3}). Hence, this function admits non-tangential boundary values a.e. 

Now let $\phi = (\phi_1,...,\phi_N): V \to \R^N$ be a smooth injective map from a neighborhood $V$ of $\overline\Omega$ to $\R^N$ for some integer $N$ large enough. Since the function 
$\rho$ is a $C^2$ strictly    MPSH function in a neighborhood of $\overline\Omega$,   for $\varepsilon > 0$ small enough the functions $\rho + \varepsilon \phi_j$ are also MPSH functions for $j=1,...,N$. We apply the above normality  argument to the subharmonic functions $w_\varepsilon = \rho \circ u + \varepsilon \phi_j \circ u$, $j= 1,...,N$. We conclude that every function $\phi_j \circ u$ has non-tangential boundary values a.e. By injectivity, $u$ admits non-tangential boundary values $u^*(e^{i\theta})$ a.e. This proves Theorem~\ref{Fatou-thm}. \qed

\section{Picard theorems and hyperbolicity}

In this section we establish an analog of the Picard theorem for conformal harmonic maps and give some examples of hyperbolic manifolds.

\subsection{Degeneracy locus of the Kobayashi pseudodistance} Let $(N,g)$ be a Riemannian manifold and $(M,g)$ be a relatively compact submanifold of $N$. We extend the Kobayashi pseudodistance $d_M$ on the closure $\overline M$ of $M$ in $N$ as follows: for $p,q \in \overline M$ we set

\begin{eqnarray*}
d_{\overline M}(p,q) = \lim \inf_{\tilde p \to p, \tilde q \to q} d_M(\tilde p,\tilde q), \, \, \tilde p,\tilde q \in M.
\end{eqnarray*}
We say that $p \in \overline M$ is a degeneracy point for the Kobayashi pseudodistance $d_{\overline M}$ if there exists a point $q \in \overline M \setminus \{ p \}$ such that $d_{\overline M}(p,q) = 0$. We denote by $\Sigma_M(N)$ the set of degeneracy points of $d_{\overline M}$. A point $p \in \overline M$ is called hyperbolic for $M$ if there exists 
a neighborhood $U$ of $p$ in $N$ and a  constant $C > 0$ such that $F_M \ge C \sqrt{g}$ on $U \cap M$.

\begin{prop}
\label{HypDeg}
For a point $p \in \overline M$ the following conditions are equivalent:
\begin{itemize}
\item[(i)] $p$ is not in $ \Sigma_M(N)$;
\item[(ii)] $M$ is hyperbolic at $p$. 
\end{itemize}
\end{prop}

For the proof we need the following Schwarz lemma which easily follows from standard elliptic estimates for second order quasilinear operators,  see \cite{Jo1}, Theorem 4.8.1 p. 113 :

\vspace{1mm}
\noindent{\bf Theorem (Schwarz Lemma).}
{\sl Let $(X,h)$ and $(Y,g)$ be Riemannian manifolds. Let $\B(x_0,R_0) \subset X$, $R_0 < \min\left(i_X(x_0),\frac{\pi}{2\kappa_X}\right)$, where $i_X(x_0)$ denotes the injectivity radius at $x_0$ and $-\omega_X^2 \leq K_X \leq \kappa_X^2$ are curvature bounds on $\B(x_0,R_0)$. Let $\B(y_0,R') \subset Y$, $R' < \min\left(i_Y(y_0),\frac{\pi}{2\kappa_Y}\right)$, where $i_Y(y_0)$ denotes the injectivity radius at $y_0$ and $-\omega_Y^2 \leq K_Y \leq \kappa_Y^2$ are curvature bounds on $\B(y_0,R')$.

\vspace{1mm}
If $u : X \rightarrow \B(y_0,R')$ is harmonic, then for all $R \leq R_0$:
\begin{equation}\label{harm-ineq}
|\nabla u(x_0)| \leq c_0 \max_{x \in B(x_0,R)}\frac{dist_g(u(x),u(x_0))}{R}
\end{equation}
where $c_0 = c_0(R_0,\omega_X, \kappa_X, \dim X, R', \omega_Y, \kappa_Y, \dim Y)$.
}
Here $\nabla$ denotes the gradient.

 
 Proof of Proposition~\ref{HypDeg}. We prove that (i) implies (ii). Arguing by contradiction, assume that  $p$ is not a hyperbolic point for $M$. Then there exists a sequence $(p_k, \xi_k)$ in 
 the tangent bundle $TM$, such that $p_k \to p$, $\vert \xi_k \vert_g = 1$ and $F_M(p_k,\xi_k) \to 0$. Then there exists a sequence of conformal harmonic maps 
 $f_k: \D \to M$ such that $f_k(0) = p_k \to p$ and $\vert df_k(0) \cdot e_1\vert_g \to \infty$. Consider a sufficiently small open relatively compact coordinate neighborhood $U$ around 
 the point $p$. Assume that for some $r > 0$ one has the inclusion $f_k(r\D) \subset U$. Then by the above Schwarz lemma   the sequence $(\vert df_k(0) \cdot e_1\vert_g)$ is bounded, which is a contradiction. Hence, passing to a subsequence, we can assume that there exists a sequence $z_k \to 0$ in $\D$ such that $f_k(z_k) \to q \in \partial U$. Therefore
 
\begin{eqnarray*}
d_{\overline M} (p,q) \le \lim_{k \rightarrow \infty} d_M(f_k(0),f_M(z_k)) \le \lim_{k \rightarrow \infty} \rho_{\D}(0,z_k) = 0
\end{eqnarray*}
which contradicts the non-degeneracy of $d_{\overline M}$ at $p$.

Now we prove that (ii) implies (i). Again arguing by contradiction, assume that $p$ is a degeneracy point.  Hence there exists a point $q \in \overline{M}$, $q \neq p$, such that 
$d_{\overline M}(p,q) = 0$. Since (ii) holds, there exists a neighborhood $U$ of $p$ in $M$ and a constant $C > 0$ such that $q$ is not in $\overline U$ and $F_M(x,\xi) \ge c |\xi|_g$ for each $x \in M \cap U$ and $\xi \in T_xM$. Consider neighborhoods $U_1$ and $U_2$ of $p$ and $q$ respectively such that $\overline{U_1} \subset U$ and 
$\overline U \cap \overline U_2$ is empty. Let $\tilde p \in U_1$ and $\tilde q \in U_2 \cap M$ be arbitrary points. Let $\gamma: [0,1] \to M$ be a picewise smooth path with $\gamma(0) = \tilde p$ and $\gamma(1) = \tilde q$. Then 

\begin{eqnarray*}
d_M(\tilde{p},\tilde{q}) = \inf_{\gamma} \int_0^1 F_M(\gamma(t), \dot{\gamma}(t)) dt \ge c \int_{\gamma^{-1}(U)}|\dot{\gamma} (t)|_g dt \ge c \, dist(\partial U_1, \partial U) = c_1 > 0
\end{eqnarray*}
 Therefore $d_{\overline M}(p,q) \ge c_1 > 0$, which is a contradiction. This proves Proposition~\ref{HypDeg}. \qed

  \begin{cor}
 The set  $ \Sigma_M(N)$ is closed.
  \end{cor}

 \subsection{The Picard theorem}
 Here we prove the main result of this section. Denote by $ \D^* =  \D  \setminus  \{ 0  \}$ the punctured unit disc.
 
  \begin{thm}
   \label{PicardThm}
   Let $(M,g)$ be a relatively compact submanifold of a Riemannian manifold $(N,g)$, and let $u_k:  \D^*  \to M$ be a sequence of conformal harmonic immersions. Suppose that $(w_k)$ is a sequence in $ \D^*$ converging to $0$ and such that the sequence $(u_k(w_k))$ converges to $q  \notin  \Sigma_M(N)$. Then  for every sequence $(z_k)$ in $\D^*$ converging to $0$, the sequence $(u_k(z_k))$ also converges to $q$.
    \end{thm}
   
Our approach is adapted from the one followed by F. Haggui and A. Khalfallah \cite{Ha-Kh, Ha-Kh2} who considered the case of almost complex manifolds. For the proof we need the following monotonicity property for minimal surfaces, see for example \cite{Ma}: 

 \begin{lemma}
  \label{MonotLem}
  Let $(M,g)$ be a compact Riemannian manifold. There exist positive constant $ \varepsilon$ and $c$ such that for every minimal surface $S$ in $M$ and every $0 <  \varepsilon  \le  \varepsilon_0$ one has
   \begin{eqnarray*}
   Area(S  \cap B(x, \varepsilon) ) \ge c  \varepsilon^2
    \end{eqnarray*}
    whenever $x  \in S$ and $S  \cap B(x,  \varepsilon)$ is a surface with boundary contained in the boundary of the ball $B(x,  \varepsilon)$.
     \end{lemma}
     
\noindent{\bf Proof of Theorem~\ref{PicardThm}.} Arguing by contradiction, assume that there exists a sequence $(z_k)$ converging to $0$ such that $u_k(z_k)$ converges to $ \tilde q  \neq q$.

{ \it Case 1.}  By taking subsequence and relabeling, assume that $ \vert w_k  \vert <  \vert z_k  \vert$ for every $k$. Consider the circles $\mathcal C_k=\{z \in \mathbb C : |z| = |w_k|\}$.

{ \bf Claim} $u_k( \mathcal C_k)  \to q$.
Indeed, using the decreasing property of the Kobayashi distance with respect to conformal harmonic maps, we have for every $p_k  \in  \mathcal C_k$:
 \begin{eqnarray*}
   d_M(u_k(p_k),u_k(w_k))  \le  d_{ \D^*}(p_k,w_k)  \to_{k \to \infty} 0
    \end{eqnarray*}
    because $d_{ \D^*}(p_k,w_k) = O(-1/ \log \vert w_k  \vert)$. Since $q  \notin  \Sigma_M(N)$, we conclude that $u_k(p_k)  \to q$, which proves the claim.
    
  Choose relatively compact open coordinate neighborhoods   $U$ and $W$ of $q$ such that  $ \overline U  \subset W$, $U$ is diffeomorphic to the unit ball $B(0,1)$ in $ \R^n$, 
  $$W  \cap  \Sigma_M(N) =  \emptyset$$
and 
$$ \tilde q  \notin W.$$
Since $M$ is hyperbolic at $q$, there exists a constant $c > 0$ such that 
 \begin{eqnarray}
  \label{Picard1}
  F_M  \ge c \sqrt{g}
   \end{eqnarray}
on $W  \cap M$. 

Since $u_k( \mathcal C_k)  \to q$ and $u_k(z_k)  \to  \tilde q$, for sufficiently large $k$ we have $u_k( \mathcal C_k)  \subset U$ and $u_k(z_k)  \notin W$. 

Consider the largest open annulus $A_k$ containing the circle $ \mathcal C_k$ and such that $u_k(A_k)  \subset U$. Since $u_k(z_k)  \notin W$, there exist $a_k  \ge 0$ and $b_k <  \vert z_k  \vert$ such that 
$$A_k =  \{ z \in \mathbb C : a_k <  \vert z  \vert < b_k  \}.$$
Furthermore, there exists $\tilde z_k$ with $\vert \tilde z_k \vert = b_k$ such that $u_k(\tilde z_k) \in \partial U$. Passing to a subsequence and dropping tilde we may assume that $u_k(z_k) \to p \in \partial U$.
Now consider the annulus
$$ \tilde A_k =  \{ z \in \mathbb C :  \vert w_k  \vert <  \vert z  \vert <  \vert z_k  \vert  \}$$
and the circles 
$$ \mathcal C'_k =  \{ z \in \mathbb C :  \vert z  \vert =  \vert z_k  \vert  \}.$$
Recall that $u_k( \mathcal C_k)  \to q$ and similarly to the above claim, we also have $u_k( \mathcal C'_k)  \to p  \in  \partial U$. Hence for sufficiently large $k$ we have
$$u_k( \mathcal C_k)  \subset B(q,1/4)$$
and
$$u_k( \mathcal C'_k)  \subset W \setminus \overline B(q,3/4).$$
Therefore, there exist points $s_k \in \tilde A_k$ such that 
$$u_k(s_k) \in \partial B(q,1/2).$$
By the monotonicity lemma~\ref{MonotLem} (on the compact Riemannian manifold $\overline{M}$), there exist positive constants $\varepsilon_0$ and $c_1$ such that for $0 < \varepsilon < \inf (\varepsilon_0, 1/4)$ one has 
\begin{eqnarray*}
Area(u_k(\tilde A_k)) \ge Area(u_k(\tilde A_k) \cap B(u_k(s_k),\varepsilon)) \ge c_1 \varepsilon^2.
\end{eqnarray*}

 On the other hand, since the Kobayashi-Royden metric is not increasing with respect coformal harmonic immersions, we have in view  of (\ref{Picard1})
 
 \begin{eqnarray}
 \label{dec}
 \parallel du(\zeta) \xi \parallel_g \le (1/c) F_{\D^*}(\zeta, \xi)
 \end{eqnarray}
 where
$$F_{\D^*}(\zeta, \xi) = \frac{\vert \xi \vert}{\vert \zeta \vert \vert \log \vert \zeta \vert^2\vert}$$

 Since each map $u_k$ is conformal harmonic, the energy $E(u_k)$ over $\tilde A_k$ coincides with the area $Area(u_k(\tilde A_k))$ (see for example \cite{EM}). Hence from (\ref{dec}) we have 

\begin{eqnarray}
\label{Picard5}
Area(u_k(\tilde A_k)) \le c_1 \int_{\tilde A_k}  \frac{d\zeta \wedge d\overline\zeta}{\vert \zeta \vert^2 \vert \log^2 \vert \zeta \vert \vert} \le c_2 \vert 1/ \log \vert w_k \vert - 1/ \log \vert z_k \vert \vert  \to 0.
\end{eqnarray}
where $c_1,c_2 > 0$ are constants. This is a contradiction.

{\it Case 2.} By taking subsequence and relabeling, assume that $ \vert z_k  \vert <  \vert w_k  \vert$ for every $k$. Similarly to Case 1, there exists a sequence $(\tilde z_k)$ such that for every $k$
$\vert z_k \vert < \vert \tilde z_k \vert < \vert z_k \vert$, the annulus $\tilde A_k = \{ z: \vert \tilde z_k \vert < \vert z \vert < \vert w_k \vert \}$ satisfies $u_k(\tilde A_k) \subset U$ and $u_k(\tilde z_k) \to p \in \partial U$. Then the argument of the Case 1 goes through. This proves Theorem~\ref{PicardThm}. \qed

 As a consequence we obtain the following
 
 \begin{thm}
 \label{PicardThm2}
 Let $(M,g)$ be a relatively compact submanifold of a Riemannian manifold $(N,g)$, and let $u:\D^* \to M$ be a conformal harmonic immersion. Suppose that there exists a sequence $(z_k)$ of points in  $\D^*$ converging to $0$ such that $u(z_k) \to q \notin \Sigma_M(N)$. Then $u$ extends on $\D$ as a conformal harmonic map.
 \end{thm}
 Indeed, by Theorem~\ref{PicardThm}, $u$ extends continuously to the origin. Then $u$ is harmonic on the whole $\D$ by the elliptic regularity, see for example \cite{He}.
 
 We refer the reader to  \cite{Ko} for a discussion on the relation between the versions of Theorem \ref{PicardThm} for complex manifolds and the classical Picard theorem.

\subsection{Some examples of hyperbolic manifolds} In view of Theorem \ref{PicardThm} it seems natural to explore further the classes of Kobayashi hyperbolic Riemannian manifolds
and to give more examples. We begin with an analog of the Brody lemma.

The following result is due to M.Gromov (see \cite{Gr2} Section 1.D)
 
 \begin{prop}
\label{BlGrPro}
Suppose that $(M,g)$ is a compact Riemannian manifold such that every harmonic map $\R^2 \to M$ is constant. Then the space of 
harmonic maps $f:\D \to M$ is compact in $C^r$-topology for each $r = 0,1,2,...$. That is for every point $\zeta \in \D$ there exists a constant 
$C = C(\zeta, M, r)$ such that the $r$-th order differential (jet) of every harmonic map $f: \D \to M$ satisfies 
\begin{eqnarray}
\label{BlGr1}
\vert D^rf(\zeta)\vert_g \le C.
\end{eqnarray}
\end{prop}

Therefore we obtain the following (partial) analogue of Brody's theorem
\begin{thm}
\label{Brody}
Let $(M,g)$ be a compact Riemannian manifold. Assume that every harmonic map  $\R^2 \to M$ is constant. Then $M$ is Kobayashi hyperbolic.
\end{thm}
\begin{proof} Recall that $M$ is Kobayashi hyperbolic if and only if it is locally Kobayashi hyperbolic at every point of $M$. Assume by contradiction that there exists a point $p \in M$ such that $M$ is not locally hyperbolic near $p$. Then there exists a sequence of conformal harmonic maps $f_k: \D \to M$ such that $f_k(0) = p$ and $\vert df_k(0) \cdot e_1\vert_g \to \infty$ as $k \to \infty$. But this contradicts Proposition \ref{BlGrPro}.
\end{proof}

\bigskip
\bigskip

As usual, given two manifolds $M, N$, given a Riemannian metric $g$ on $M$ and a smooth map $f : N \rightarrow M$, we denote by $f^*g$ the $(0,2)$-tensor, pull-back of $g$ by $f$ and defined for every $p \in N$ and every $v,w \in T_pN$ by $f^*g(p)(v,w) = g(f(p))(df(p)\cdot v,df(p) \cdot w)$.
If $g$ is a Riemannian metric (resp. K\"ahler metric), we denote by $K(g)$ (resp. $H(g)$) the sectional (resp. holomorphic sectional) curvature of $g$. In order to avoid any confusion and since we will consider different Riemannian metrics on the same manifold, we will denote by $F_{(M,g)}$ the Kobayashi metric on $M$ with respect to the Riemannian metric $g$. The computation of the  Kobayashi metric $F_{(M,g)}$ is based on the following version of the Schwarz Lemma for quasiconformal harmonic maps (see \cite{Go-Is} Theorem 2, p.565) :

{\bf Theorem \cite{Go-Is}.} {\sl Let $\D$ be endowed with the Poincar\'e
metric $g_{\D}$ of constant curvature $-4$. Let $M$ be a Riemannian manifold
with sectional curvature $K(g)$ bounded from above by a negative constant $-c$. If $u : \D \rightarrow M$ is a conformal harmonic mapping, then :
\begin{equation}\label{schwarz-eq}
u^*g \leq \frac{8}{c} g_{\D}.
\end{equation}
}
Here, for every $\zeta \in \D$, $g_{\D}(\zeta) = \frac{d\zeta \otimes d\overline{\zeta}}{(1-|\zeta|^2)^2}$.

Notice that the Poincar\'e metric $g_{\D}$ on $\mathbb D$ is conformal to the standard Euclidian metric $g_{st}$. Moreover, the Euler-Lagrange equations for harmonic maps are conformally invariant in real dimension two. Hence $u:(\D,g_{st}) \rightarrow (M,g)$ is conformal harmonic if and only if $u:(\D,g_{\D}) \rightarrow (M,g)$ is conformal harmonic.

We have the following
\begin{prop}\label{cover-hyp}
Let $\pi : \tilde M \rightarrow M$ be a covering space and let $\tilde g$ be the unique Riemannian metric on the covering manifold $\tilde M$ such that $\pi$ is a Riemannian covering map. Then $(\tilde M, \tilde g)$ is Kobayashi (complete) hyperbolic if and only if $(M,g)$ is Kobayashi (complete) hyperbolic.
\end{prop}

\begin{proof}
By definition $\pi$ is a local isometry, i.e. $\pi^*g=\tilde{g}$. If $u: \mathbb D \rightarrow (\tilde M, \tilde g)$ is a conformal harmonic immersed disc, then $\pi \circ u : \mathbb D \rightarrow (M,g)$ is also a conformal harmonic immersed disc and
$\vert du(0) \cdot e_1\vert_{\tilde{g}} = \vert d(\pi \circ u)(0) \cdot e_1\vert_{g}$.
Conversely, let $v : \mathbb D \rightarrow (M,g)$ be a conformal harmonic immersed disc. Since $\mathbb D$ is simply connected, then for every $y \in \pi^{-1}(v(0))$ there is a unique map $\tilde v : \mathbb D \rightarrow \tilde M$ such that $\tilde v (0) = y$ and $\pi \circ \tilde v = v$. By definition of $\tilde{v}$ and the properties of $\pi$, the energy of $v$ is equal to the energy of $\tilde{v}$. Hence $\tilde v$ is a conformal harmonic disc in $(\tilde M, \tilde g)$ and $\vert d\tilde{v}(0) \cdot e_1\vert_{\tilde{g}} = \vert dv(0) \cdot e_1\vert_{g}$.
We proved that $\pi^*(F_M) = F_{\tilde M}$, implying that $(M,g)$ is Kobayashi hyperbolic if and only if $(\tilde M, \tilde g)$ is. The completeness is proved using chains of conformal harmonic immersed discs, repeating the proof of Theorem~3.2.8 in \cite{Ko}.
\end{proof}

Since the universal cover of a complete Riemannian (resp. K\"ahler) manifold $(M,g)$, with constant sectional curvature $\kappa(g)$ (resp. constant holomorphic sectional $H(g)$), is either the real (resp. complex) Euclidean space, the sphere or the real (resp. complex) hyperbolic space endowed with their classical metrics (see \cite{Ha, Ig} for the complex case), we have as applications of the previous Theorem and of Proposition~\ref{cover-hyp}:

\begin{example}\label{hyp0-ex}
Let $\mathbb R^n$ be endowed with the Euclidean metric $g_{st}$. Then $F_{(\mathbb R^n,  g^{st})} = 0$. 
More generally, if $(M,g)$ is a complete Riemannian manifold with $K(g) h 0$, then $F_{(M,g)} = 0$.
\end{example}

\begin{example}\label{kah0-ex}
Let $\mathbb C^n$ be endowed with the Euclidean metric $g_{st}$. Then $F_{(\mathbb C^n,  g_{st})} = 0$. More generally, if $(M,g)$ is a complete K\"ahler manifold of complex dimension $n$ with $H(g) = 0$, then $F_{(M,g)} = 0$ since the universal cover of $(M,g)$ is $(\mathbb C^n,g^{Eucl}_{\mathbb C^n})$, see \cite{Ha, Ig}.
\end{example}

\begin{example}\label{ball0-ex}
Let $\mathbb B^n$ be the unit ball in $\mathbb R^n$ endowed with the Euclidean metric $g_{st}$ of $\mathbb R^n$. Then, according to \cite{Fo-Ka}, 
$$F_{(\mathbb B^n, g_{st})}(x,v) = \sqrt{\frac{|v|^2}{1-|x|^2} + \frac{|x \cdot v|^2}{(1-|x|^2)^2}}$$ for every $x \in \mathbb B^n$, $v \in \mathbb R^n$. Here $|v|$ denotes the Euclidean norm of $v$ and $x \cdot v$ the scalar product of $x$ and $v$ in $\mathbb R^n$.
\end{example}

\begin{example}\label{hypball1-ex}
Let $\mathbb H^n:=\{(x_1,\dots,x_n) \in \mathbb R^n / x_n < 0\}$ endowed with the hyperbolic metric $g_{hyp}$ with $K(g_{hyp}) = -4$.
Then for every $v \in \mathbb R^n$, we have :
$$
F_{(\mathbb H^n, g_{hyp})}({\bf -1},v) \geq \frac{\sqrt{g_{hyp}({\bf -1}) (v,v)}}{\sqrt{2}} = \frac{\vert v \vert}{\sqrt{2}}.
$$
More generally, if $(M,g)$ is a complete Riemannian manifold with $K(g) = -c<0$, then $M$ is complete Kobayashi hyperbolic since it is covered by $\mathbb H^n$ by the Killing-Hopf theorem (see \cite{Le}, p.348).

\end{example}

\begin{example}\label{bergm-ex}
Let $b_{\mathbb B^n}$ denote the Bergman metric on the unit ball $\mathbb B^n$ in $\mathbb C^n$, with $H(b_{\mathbb B^n}) = -4/(n+1)$. Then for every orthonormal $v,w \in \mathbb C^n$, the sectional curvature $K(v,w)$ of the real two plane $Span_{\mathbb R}(v,w)$ satisfies $K(v,w)=-\frac{1}{n+1}(1+3<v,iw>^2) \leq -\frac{1}{n+1}$. Hence for every $v \in \mathbb C^n$, we have :
$$
F_{(\mathbb B^n, b_{\mathbb B^n})}(0,v) \geq \frac{\vert v \vert}{\sqrt{8(n+1)}}.
$$
More generally, if $(M,g)$ is a complete K\"ahler manifold with $H(g) = -c < 0$, then $M$ is a complete Kobayashi hyperbolic since the universal cover of $(M,g)$ is $(\mathbb B^n,b_{\mathbb B^n})$, see \cite{Ha, Ig}.


\end{example}



\end{document}